\newtheorem{remark}{{\it Remark}}[section]
\newcommand{\tra}[1]{\,{\vphantom{#1}}^{\textsc{t}}\!{#1}}
\begin{document}

\title{Discrete Calculus of Variations for Quadratic Lagrangians. Convergence Issues}

\author{Philippe Ryckelynck\footnotemark[1]~~ Laurent Smoch\footnotemark[1]} 

\thanks{{ULCO, LMPA, F-62100 Calais, France.\hspace{1cm}e-mail: \{ryckelyn,smoch\}@lmpa.univ-littoral.fr\\
\indent \indent Univ Lille Nord de France, F-59000 Lille, France. CNRS, FR 2956, France.}}
\maketitle
\baselineskip15pt

\begin{abstract}
We study in this paper the continuous and discrete Euler-Lagrange equations arising from a quadratic lagrangian. Those equations may be thought as numerical schemes and may be solved through a matrix based framework. When the lagrangian is time-independent, we can solve both continuous and discrete Euler-Lagrange equations under convenient oscillatory and non-resonance properties. The convergence of the solutions is also investigated. In the simplest case of the harmonic oscillator, unconditional convergence does not hold, we give results and experiments in this direction.
\end{abstract}

\begin{keywords}
Calculus of variations, Functional equations, Discretization, Boundary value problems, Pseudo-periodic solutions.
\end{keywords}

\begin{AMS}
49K21, 49K15, 65L03, 65L12, 34K14
\end{AMS}

\section{Introduction}

\medskip 

The principle of least action may be extended to the case of non-differentiable dynamical variables by replacing in the lagrangian $\mathcal{L}(\mathbf{x},\dot{\mathbf{x}})$ the derivative $\mathbf{\dot x}(t)$ of the dynamical variable $\mathbf{x}(t)$ with a $2N+1$-terms scale derivative 
\begin{equation}
\Box _\varepsilon \mathbf{x}(t)=\sum_{i=-N}^Nc_i\mathbf{x}(t+i\varepsilon)\chi _{-i}(t),~~t\in[a,b],
\label{ourbox}
\end{equation}
see \cite{Cre,CFT,RS2}. Here, $\varepsilon$ stands for some time delay and $\chi _i(t)$ denotes the characteristic function of the interval $[\max(a,a+i\varepsilon ),\min (b,b+i\varepsilon )]$. Critical points of classical actions are characterized by the classical Euler-Lagrange equations $\nabla_\mathbf{x}\mathcal{L}-d/dt\nabla_{\dot{\mathbf{x}}}\mathcal{L}=0$. Similarly, we proved in \cite{RS2} that the equations of motion for discretized actions are
\begin{equation}
\nabla _\mathbf{x} \displaystyle \mathcal{L}+\Box_{-\varepsilon }\nabla_{\mathbf{\dot x}}\mathcal{L}=0.
\label{intrinsic}
\end{equation}
We abbreviate as C.E.L. and D.E.L. the classical and discrete Euler-Lagrange systems of equations respectively.

In this paper we work with lagrangians of the shape $\mathcal{L}(\mathbf{x},\dot{\mathbf{x}})$ and $\mathcal{L}(\mathbf{x},\Box_\varepsilon\mathbf{x})$ where $\mathcal{L}:\mathbb{C}^d\times\mathbb{C}^d\rightarrow \mathbb{C}$ is a quadratic polynomial. We are interested in solving C.E.L. and D.E.L. under Dirichlet conditions. More accurately, we study the existence and the unicity of pseudo-periodic solutions $\mathbf{z}(t)$ of C.E.L. and $\mathbf{y}_\varepsilon(t)$ of D.E.L., $\varepsilon$ being fixed. The underlying assumptions for this to occur may be thought as an ``oscillatory" condition for the lagrangian $\mathcal{L}(\mathbf{x},\mathbf{y})$ and as a ``non-resonance" condition for the Dirichlet problem associated to C.E.L. and D.E.L.. With this in mind, we address the problem of convergence of $\mathbf{y}_\varepsilon(t)$ to $\mathbf{z}(t)$.

The paper is organized as follows. Section 2 gives notation and basic definitions used throughout. 
In Section 3, we develop a matricial based framework to solve D.E.L. for all quadratic time-dependent lagrangian.
In Section 4, we provide under mild assumptions formulas for the components of the pseudo-periodic solutions of C.E.L. and D.E.L. when the lagrangian does not explicitly depend on time. This allows us to compute in some particular cases the phases of $\mathbf{y}_\varepsilon(t)$ and $\mathbf{z}(t)$ help to the matrix $A$. 
Section 5 is a preliminary discussion of convergence of $\mathbf{y}_\varepsilon(t)$, uniformly locally in $]a,b[$, as $\varepsilon$ tends 0 for stationary lagrangians and pseudo-periodic solutions. 
If $\mathcal{L}$ is a non-resonant oscillatory lagrangian and $\Box_\varepsilon$ is a well-chosen three-terms operator, the previous convergence property is the content of our main theorem which is proved in Section 6. 
In Section 7, we give numerical experiments to illustrate the non-unconditional convergence of solutions.

\section{Preliminaries}

First, let us collect some notation and definitions from \cite{RS2}. If the context is clear enough, $i=\sqrt{-1}$. Let $[a,b]$ be some interval of time and a time delay $\varepsilon >0$ be fixed throughout. The integers $d$ and $N$ denote respectively the ``physical" dimension and the number of samples in $\mathbb{C}^d$. We define for $t_0\in[a,b]$ the grid $\mathcal{G}_{t_0,\varepsilon}=\{t_0+n\varepsilon ,n\in\mathbb{N}\}\cap[a,b]$.  We denote by $I_d$ the identity matrix of size $d$.

Let $\mathcal{C}_{pw}(d,N)$ be the space of the functions $\mathbf{x}:[a,b]\rightarrow \mathbb{C}^d$ continuous on each interval $[a+i\varepsilon,a+(i+1)\varepsilon ]\cap [a,b]$ for all $i\in\{-N,\ldots,N\}$. The two functional spaces $C^1([a,b],\mathbb{C}^d)$ and $\mathcal{C}_{pw}(d,N)$ are Banach algebras with uniform norms. The operator $\Box_\varepsilon$ given in (\ref{ourbox}) is a continuous linear  endomorphism of $\mathcal{C}_{pw}$.

Now, let be given six mappings $P,Q,J_1\in C^1([a,b],\mathbb{C}^{d\times d})$, $J_2,J_3\in C^1([a,b],\mathbb{C}^d)$ and $J_4\in C^1([a,b],\mathbb{C})$.
We suppose that for all $t$, $P(t)$ and $Q(t)$ are symmetric and $J_1(t)$ is skew-symmetric. We set
\begin{equation}
\mathcal{L}(\mathbf{x},\mathbf{y})=\frac 12\tra{\mathbf{y}}P\mathbf{y}+\frac12\tra{\mathbf{x}}Q\mathbf{x}+\tra{\mathbf{x}}J_1\mathbf{y}+\tra{J_2}\mathbf{y}+\tra{J_3}\mathbf{x}+J_4.  
\label{bilineaire}
\end{equation} 
and we define the quadratic lagrangians $\mathcal{L}(\mathbf{x},\dot{\mathbf{x}})$ and $\mathcal{L}(\mathbf{x},\Box_\varepsilon\mathbf{x})$. If the coefficients in (\ref{bilineaire}) do not depend explicitly on time, we shall say that $\mathcal{L}$ is stationary.

We will consider actions $\mathcal{A}_{cont}(\mathbf{x})$ and $\mathcal{A}_{disc}(\mathbf{x})$ of the shape 
\begin{equation}
\mathcal{A}_{cont}(\mathbf{x})=\int_a^b \mathcal{L}(\mathbf{x},\mathbf{\dot{x}})(t)dt,\hspace{1cm}\mathcal{A}_{disc}(\mathbf{x})=\int_a^b \mathcal{L}(\mathbf{x},\Box_\varepsilon{\mathbf{x}})(t)dt.
\label{action}
\end{equation}
The actions $\mathcal{A}_{cont}:C^1([a,b],\mathbb{C}^d)\rightarrow\mathbb{C}$ and $\mathcal{A}_{disc}:\mathcal{C}_{pw}(d,N)\rightarrow \mathbb{C}$ are continuous and Fr\'echet differentiable everywhere.

We give in \cite{RS2} the necessary first order conditions of local optimum of $\mathcal{A}_{cont}$ and $\mathcal{A}_{disc}$ under the Dirichlet constraints $\mathbf{x}(a)=\mathbf{d}_a$ and $\mathbf{x}(b)=\mathbf{d}_b$ in the previous spaces, where $\mathbf{d}_a$ and $\mathbf{d}_b$ are two fixed vectors in $\mathbb{C}^d$. The Euler-Lagrange equations associated to each action in (\ref{action}) can be written as 
\begin{eqnarray}
-P\ddot {\mathbf{x}}+(-\dot P+2J_1)\dot {\mathbf{x}}+(\dot J_1+Q)\mathbf{x}-\dot{J_2}+J_3=0,  
\label{elho}\\
\Box_{-\varepsilon }(P\Box _\varepsilon \mathbf{x})-\Box _{-\varepsilon }(J_1\mathbf{x})+J_1\Box _\varepsilon \mathbf{x}+Q\mathbf{x}+\Box _{-\varepsilon
}J_2+J_3=0.  
\label{mchomat}
\end{eqnarray}

The problem of convergence as $\varepsilon$ tends to $0$ of the operator in the l.h.s. of (\ref{mchomat}) to the corresponding operator in the l.h.s. of (\ref{elho}) has been studied in \cite{RS2}. In this context, we introduced the class of discretization operators given by
\begin{equation}
\Box^{[r,s]}_\varepsilon \mathbf{x}(t)=-\chi_{+1}(t)\frac{s}{\varepsilon}\mathbf{x}(t-\varepsilon)+\frac{s-r}{\varepsilon}\mathbf{x}(t)+\chi_{-1}(t)\frac{r}{\varepsilon}\mathbf{x}(t+\varepsilon).
\label{ourboxpq}
\end{equation}
where $r,s\in\mathbb{C}$. In fact, (\ref{ourboxpq}) gives the shape of three-terms operators satisfying $\Box_\varepsilon(1)(t)=0$ inside $[a+2\varepsilon,b-2\varepsilon]$. 

Without assuming the convergence of the schemes in the previous sense, we focus on the following two problems. Are the Dirichlet problems for (\ref{elho}) and (\ref{mchomat}) well-posed? Are there periodic or pseudo-periodic solutions? In fact, if $\varepsilon=(b-a)/M$ for some $M\in\mathbb{N}^\star$ and if $\mathbf{x}_\varepsilon(t)$ is a solution of D.E.L., then $\mathbf{x}_\varepsilon$ is uniquely determined on the grid $\mathcal{G}_{a,\varepsilon}$. We shall see later how to construct from $\mathbf{x}_\varepsilon$ the unique corresponding pseudo-periodic solution $\mathbf{y}_\varepsilon$ of (\ref{mchomat}).

\section{An effective method for solving D.E.L.}

In this section, the datas $N,d,\mathcal{L}$, $\mathbf{d}_a,\mathbf{d}_b$, $\varepsilon,\Box_\varepsilon$ are fixed but arbitrary.

\subsection{D.E.L. as delayed functional equations}

The equations (\ref{mchomat}) may be thought as a mixture between recurrence equations and delayed functional equations. Let us transform the problem of solving (\ref{mchomat}) into an infinite set of problems, each of one dealing with recurrence vector equations with the additional difficulty of the perturbation of the boundaries. We can identify each function $\mathbf{x}:[a,b]\rightarrow \mathbb{C}^d$ to the
infinite set of finite sequences $(\mathbf{x}(t_0+n\varepsilon ),n\in\mathbb{Z})$ with indices such that $\frac{a-t_0}\varepsilon \leq n\leq 
\frac{b-t_0}\varepsilon $ and where $t_0$ lies in an interval of length $\varepsilon $. In fact, because (\ref{mchomat}) involves the second order operator $\Box_{-\varepsilon}\Box_\varepsilon$, it may be formulated in an abstract manner as 
\begin{equation}
\mathbf{F}(t,\mathbf{x}(t-2N\varepsilon ),\mathbf{x}(t-(2N-1)\varepsilon
),\ldots ,\mathbf{x}(t+(2N-1)\varepsilon ),\mathbf{x}(t+2N\varepsilon ))=0
\label{formulerec}
\end{equation}
where $\mathbf{F}:\mathbb{C}^{1+(4N+1)d}\rightarrow \mathbb{C}^d$ contains the coordinates of the l.h.s. of (\ref{mchomat}). Hence, we solve (\ref{formulerec}) with respect to $\mathbf{x}(t_0+2N\varepsilon )$ for fixed $t_0$, or what amounts to the same thing by expressing $\mathbf{x}(t_0+4N\varepsilon)$ as a function of $\mathbf{x}(t_0+k\varepsilon)$ for $k\in \{0,\ldots ,4N-1\}$. 

For instance, the case $N=1$ and $d$ arbitrary is the most interesting one, and we may rewrite in this case the equations (\ref{mchomat}) as a system of $d$ equations 
\begin{equation}
{\small \begin{array}{ll}
\phantom{+}
x_i(t-2\varepsilon ) & [c_1c_{-1}\chi _1\chi _2P_{ij}(t-\varepsilon)]\\ 
+x_i(t-\varepsilon ) & [c_0c_1\chi_1P_{ij}(t-\varepsilon)+c_0c_{-1}\chi _0\chi_1P_{ij}(t)+c_1\chi_1(J_1)_{ji}(t-\varepsilon)+c_{-1}\chi_1(J_1)_{ij}(t)]\\ 
+x_i(t) & [c_1^2\chi _0\chi _1P_{ij}(t-\varepsilon)+c_0^2\chi _0P_{ij}(t)+c_{-1}^2\chi_0\chi _{-1}P_{ij}(t+\varepsilon )+\chi _0Q_{ij}(t)]\\ 
+x_i(t+\varepsilon ) & [c_0c_1\chi _0\chi_{-1}P_{ij}(t)+c_0c_{-1}\chi _{-1}P_{ij}(t+\varepsilon )+c_1\chi _{-1}(J_1)_{ij}(t)+c_{-1}\chi_{-1}(J_1)_{ji}(t+\varepsilon )]\\ 
+x_i(t+2\varepsilon ) & [c_{-1}c_1\chi _{-1}\chi
_{-2}P_{ij}(t+\varepsilon )]\\ 
\multicolumn{2}{l}{+(c_1\chi _0\chi _1(J_2)_j(t-\varepsilon )+c_0\chi _0(J_2)_j(t)+c_{-1}\chi
_0\chi _{-1}(J_2)_j(t+\varepsilon )+\chi _0(J_3)_j(t))=0}
\end{array}}
\label{mchomat3}
\end{equation}
for each $j\in \{1,\ldots ,d\}$, $\forall t\in [a,b]$, with summation on $i$ when repeated. This equation has been heavily used for numerical
experiments.

\subsection{Solving D.E.L. in the safety interval}
\label{Solving D.E.L. in the safety interval}

Given $t_0\in [a,b]$, we define the \it safety interval\rm~ as the segment $\mathcal{I}_S\subset\mathbb{N}$ such that 
\begin{equation}
n\in\mathcal{I}_S\mbox{ iff }t_0+(n-j)\varepsilon\in[a,b]\mbox{ for all }j\in\{0,\ldots,4N-1\}.
\label{safety}
\end{equation}
We convert now (\ref{mchomat}) into a linear recurrence in $\mathbb{C}^{4dN}$. For $n\in\mathcal{I}_S$, we set 
\begin{equation*}
\mathbf{v}_n=\begin{pmatrix}\mathbf{x}(t_0+n\varepsilon)\\
\mathbf{x}(t_0+(n-1)\varepsilon)\\
\vdots\\
\mathbf{x}(t_0+(n-4N+1)\varepsilon)
\end{pmatrix}
\in \mathbb{C}^{4dN}.
\end{equation*}
When $n\in\mathcal{I}_S$, every characteristic function occuring in (\ref{mchomat}) equals to 1. Then, there exists well-defined matrices $A_n\in\mathbb{C}^{4dN\times 4dN}$ and vectors $\mathbf{b}_n\in\mathbb{C}^{4dN}$, depending only on $n,\Box_\varepsilon,P,Q,J_1,J_2$ and $J_3$, such that (\ref{mchomat}) is equivalent to
\begin{equation}
\mathbf{v}_{n+1}=A_n\mathbf{v}_n+\mathbf{b}_n.
\label{systlin}
\end{equation}
The matrix $A_n$ is defined at this stage if $n,n+1\in\mathcal{I}_s$, and admits a block structure with $4N\times 4N$ blocks of size $d\times d$. On block rows $2,3,\ldots ,N$, the blocks are either identity blocks or zero blocks, and on block row 1, the blocks $B_{i,n}$, $i\in \{1,\ldots,4N\}$ will express the matricial coefficients in the equation derived from (\ref{mchomat}) by solving it w.r.t. $\mathbf{x}(t_0+n\varepsilon)$. In this way, the matrix $A_n$ is the block companion matrix of the matrix polynomial
\begin{center}
$I_dX^{4N}-B_{1,n}X^{4N-1}-B_{2,n}X^{4N-2}-\ldots-B_{4N-1,n}X-B_{4N,n}$.
\end{center}

For sake of clarity, if $\mathcal{L}$ is stationary, it turns out that those $4N$ blocks have the shape 
\begin{equation}
B_{i,n}=c^{\prime}_iI_d+c^{\prime\prime}_iP^{-1}J_1+c^{\prime\prime\prime}_iP^{-1}Q
\label{blocksBi}
\end{equation} 
where the constants $c^\prime_i,c^{\prime\prime}_i,c^{\prime\prime\prime}_i$ depend only on $i$ and the coefficients $c_j$.
Moreover, if $N=1$, the following formulas for $\tra{A_n}$ and $\tra{\mathbf{b}_n}$ display the general structures of $A_n$ and $\mathbf{b}_n$
\begin{equation}
\tra{A_n}=\begin{pmatrix}
-\frac{(c_1+c_{-1})c_0}{c_1c_{-1}}I_d-\frac{c_1-c_{-1}}{c_1c_{-1}}P^{-1}J_1 &
I_d & 0 & 0\\
-\frac{(c_0^2+c_1^2+c_{-1}^2)}{c_1c_{-1}}I_d-%
\frac{1}{c_1c_{-1}}P^{-1}Q & 0 & I_d & 0 \\
-\frac{(c_1+c_{-1})c_0}{c_1c_{-1}}I_d-\frac{c_{-1}-c_{1}}{c_1c_{-1}}P^{-1}J_1
& 0 & 0 & I_d\\ -I_d & 0 & 0 & 0 \end{pmatrix}  
\label{traA}
\end{equation}
\vspace{-0.5cm}
\begin{equation}
\tra{\mathbf{b}}_n=\begin{pmatrix}
-\frac{P^{-1}}{c_1c_{-1}}(J_2\Box_{-\varepsilon}(1)+J_3) & 0 & 0 &  0
\end{pmatrix}.
\end{equation}

\subsection{Conditions for D.E.L. to be well-posed}

Let us consider the problem of solving D.E.L. under Dirichlet conditions. In the following result, we deal with existence, uniqueness and determination  of the restrictions of the solutions of D.E.L. to the various grids $\mathcal{G}_{t_0,\varepsilon}$.
\begin{theorem}
Let $t_0\in[a,b]$ and $\varepsilon>0$.
\begin{itemize}
\item If $\{a,b\}\subset \mathcal{G}_{t_0,\varepsilon}$, either there does not exist any solution $\mathbf{x}_\varepsilon(t)$ on $[a,b]$, or the restriction of each solution to $\mathcal{G}_{t_0,\varepsilon}$ is uniquely determined by the vectors $\mathbf{d}_a$ and $\mathbf{d}_b$ in $\mathbb{C}^d$.
\item If for instance $\{a,b\}\cap\mathcal{G}_{t_0,\varepsilon}=\{a\}$, then the set of solutions $\mathbf{x}_\varepsilon:\mathcal{G}_{t_0,\varepsilon}\mapsto\mathbb{C}^d$ of (\ref{mchomat}) is in one-to-one correspondance with $\mathbb{C}^d$.
\item If $\{a,b\}\cap\mathcal{G}_{t_0,\varepsilon}=\emptyset$, then the set of solutions of (\ref{mchomat}) on $\mathcal{G}_{t_0,\varepsilon}$ is in one-to-one correspondance with $\mathbb{C}^d\times\mathbb{C}^d$.
\end{itemize}
\end{theorem}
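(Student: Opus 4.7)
My plan rests on converting D.E.L. on the grid into the linear recurrence $\mathbf{v}_{n+1}=A_n\mathbf{v}_n+\mathbf{b}_n$ of (\ref{systlin}), valid on the safety interval $\mathcal{I}_S$. The first key observation is that each $A_n$ is invertible: for $N=1$ the last block-column of $\tra{A_n}$ in (\ref{traA}) is $(-I_d,0,0,0)$, whence $\det A_n=\pm 1$; for general $N$ the same conclusion follows from the block-companion form of $A_n$ combined with the structure (\ref{blocksBi}), since the block $B_{4N,n}$ is a nonzero scalar multiple of $I_d$ (the leading coefficient of the second-order operator $\Box_{-\varepsilon}\Box_\varepsilon$ is $c_{N}c_{-N}I_d$ up to normalisation). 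Hence prescribing $\mathbf{v}_{n_0}$ at any single index $n_0\in\mathcal{I}_S$ determines $(\mathbf{v}_n)_{n\in\mathcal{I}_S}$ bijectively, forward and backward.

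Using this, I would parametrise the restriction of any solution of D.E.L.\ to the grid by the \emph{initial data} $\mathbf{v}_{n_a+4N-1}$, i.e.\ the first $4N$ grid values $\mathbf{x}(t_0+n_a\varepsilon),\ldots,\mathbf{x}(t_0+(n_a+4N-1)\varepsilon)$; every remaining grid value is an affine function of this initial vector through iterated application of (\ref{systlin}). The initial-data space is thus identified with $\mathbb{C}^{4dN}$. The only remaining constraints are the $4N$ \emph{boundary} D.E.L.\ equations at the grid indices $n\in\{n_a,\dots,n_a+2N-1\}\cup\{n_b-2N+1,\dots,n_b\}$, where some of the characteristic functions appearing in (\ref{mchomat}) vanish, together with any Dirichlet conditions that can be read on the grid.

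For \textbf{Case~1} ($\{a,b\}\subset\mathcal{G}_{t_0,\varepsilon}$), the Dirichlet condition at $a$ fixes one block of $\mathbf{v}_{n_a+4N-1}$ directly, while the one at $b$ fixes the top block of $\mathbf{v}_{n_b}$, which the recurrence expresses as an affine function of the initial data; adjoining these $2d$ scalar relations to the $4N$ boundary D.E.L.\ equations yields an overdetermined linear system on $\mathbb{C}^{4dN}$ whose solution set is empty or reduced to a single point, which is the asserted dichotomy. For \textbf{Cases~2} and \textbf{3}, no Dirichlet constraint can be posed on grid values, and one must analyse the kernel of the $4N$ boundary D.E.L.\ equations alone; the drop of the rank by $d$ (resp.\ $2d$) compared with Case~1 corresponds to the missing Dirichlet constraint at $b$ (resp.\ at both $a$ and $b$), which produces the bijections with $\mathbb{C}^d$ and $\mathbb{C}^d\times\mathbb{C}^d$.

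The hard part is the rank count in the last step: one must show that the boundary D.E.L.\ equations, when pulled back via the recurrence to the $\mathbb{C}^{4dN}$ initial-data space, have exactly corank $2d$, and that each Dirichlet condition contributes an independent rank-$d$ constraint. I expect this to come from the truncated-$\chi$ structure of $\Box_{-\varepsilon}\Box_\varepsilon$ near $t=a$ and $t=b$: the two extreme boundary equations each lose the top- and bottom-most shift term (the one proportional to $\mathbf{x}(t\pm 2N\varepsilon)$), so each carries only a rank-$d$ deficit in the direction of the missing endpoint value, and this deficit is exactly compensated by the corresponding Dirichlet condition when available.
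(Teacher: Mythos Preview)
Your strategy differs from the paper's and leaves the key step unfinished. The paper does \emph{not} parametrise solutions by the full $4dN$-dimensional state $\mathbf{v}_{n_a+4N-1}$ and then impose the boundary D.E.L.\ equations as extra constraints whose rank must be counted. Instead it implements a shooting method with only $2d$ parameters: the Dirichlet value $\mathbf{d}_a$ and a single unknown vector $\mathbf{d}_s=\mathbf{x}_\varepsilon(a+\varepsilon)$. The truncated D.E.L.\ equations near $t=a$ are used not as constraints but as \emph{defining relations} that successively enlarge the state vector from size $2d$ up to $4Nd$ via rectangular matrices $A_1,A_2$; symmetrically, the truncated equations near $t=b$ are encoded by rectangular matrices $A_{M-3},A_{M-2},A_{M-1}$ that shrink the state back down. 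The resulting product $A_{M-1}\cdots A_1$ is a $d\times 2d$ matrix, and the shooting equation $\mathbf{x}_\varepsilon(b)=\mathbf{d}_b$ is a $d\times d$ linear system for $\mathbf{d}_s$, whose Cramer condition is (\ref{cramer}). With this device the three cases are immediate: in Case~1 the shooting system is square (hence the dichotomy); in Case~2 there is no closing equation at $b$, so $\mathbf{d}_s\in\mathbb{C}^d$ remains free; in Case~3 neither endpoint lies on the grid, so the two leftmost grid values are free in $\mathbb{C}^d\times\mathbb{C}^d$.

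Your corank-$2d$ claim for the boundary D.E.L.\ system is exactly what the paper's rectangular-matrix construction makes transparent, but you only conjecture it (``I expect this to come from\ldots''); without it your argument for Cases~2 and~3 is incomplete. In addition, your reduction of Cases~2 and~3 to ``Case~1 minus one or two Dirichlet constraints'' is not the right comparison: when $b\notin\mathcal{G}_{t_0,\varepsilon}$ the grid itself is shorter, so the boundary D.E.L.\ equations near the right end are attached to different grid points and one cannot simply delete a row from the Case~1 system. Finally, your sentence that in Cases~2 and~3 ``no Dirichlet constraint can be posed on grid values'' contradicts your own subsequent analysis: in Case~2 one has $a\in\mathcal{G}_{t_0,\varepsilon}$, so $\mathbf{x}(a)=\mathbf{d}_a$ \emph{is} a grid constraint and must be imposed.
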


\begin{proof}
We assume that $N=1$ only to be more explicit, the case $N>1$ having the same qualitative features. 
Let us suppose that $\{a,b\}\subset \mathcal{G}_{t_0,\varepsilon}$ and w.l.o.g. that $t_0=a$ and $b-a=M\varepsilon$ where $M\in\mathbb{N}^\star$. For the need of the proof, we pursue the construction of $A_n$ when $n\notin\mathcal{I}_S$. In that case, some characteristic functions occuring in (\ref{mchomat}) vanish, this relationship is no more of order $d$, and the sizes of $\mathbf{v}_n$ and $A_n$ must change.
We have $\mathbf{x}_\varepsilon (a)=\mathbf{d}_a$ and we set $\mathbf{x}_\varepsilon (a+\varepsilon )=\mathbf{d}_s\in \mathbb{C}^d$ which is introduced without being determined at this stage, firmly from recurrences. Plugging $t=a$ in recurrence (\ref{mchomat3}) and solving, we first get \begin{center}
$\mathbf{x}_\varepsilon (a+2\varepsilon)=B_{1,1}\mathbf{d}_s+B_{2,1}{\mathbf{d}_a}$
\end{center}
where $B_{1,1},B_{2,1}$ are blocks similar to those occuring in (\ref{blocksBi}). Next, with $t=a+\varepsilon $ we find 
\begin{center}
$\mathbf{x}_\varepsilon (a+3\varepsilon )=(B_{1,2}B_{1,1}+B_{2,2})\mathbf{d}_s+(B_{1,2}B_{2,1}+B_{3,2})\mathbf{d}_a$.
\end{center}
The following iterations express $\mathbf{x}_\varepsilon (a+n\varepsilon )$ as a linear combination of the vectors $\mathbf{d}_s,\mathbf{d}_a$, with coefficients being polynomial matrices in $B_{i,k}$. From index from $n=4$ to $n=M-3$ the recurrence (\ref{mchomat3}) becomes or order $4N+1$ and may be reformulated as (\ref{systlin}). Finally, the three last steps  $n=M-2,M-1,M$ are similar and imply three systems of decreasing sizes. In order to convert matricially this process, we introduce the five rectangular matrices $A_i$
\[
A_1=\left( 
\begin{array}{cc}
B_{1,1} & B_{2,1} \\ 
I_d & 0 \\ 
0 & I_d
\end{array}
\right) ,~~A_2=\left( 
\begin{array}{ccc}
B_{1,2} & B_{2,2} & B_{3,2} \\ 
I_d & 0 & 0 \\ 
0 & I_d & 0 \\ 
0 & 0 & I_d
\end{array}
\right),~A_{M-1}=\left( B_1~-I_d\right),
\]
\[
A_{M-3}=\left( 
\begin{array}{cccc}
B_{1,M-3} & B_{2,M-3} & B_{3,M-3} & -I_d \\ 
I_d & 0 & 0 & 0 \\ 
0 & I_d & 0 & 0
\end{array}
\right),~A_{M-2}=\begin{pmatrix}
B_{1,M-2} & B_{2,M-2} & -I_d \\ 
I_d & 0 & 0
\end{pmatrix}
\]
The operators $A_1,A_2$ are used to compute the values $\mathbf{x}_\varepsilon (a+n\varepsilon )$ for $n=2,3$ linearly as functions of $\mathbf{d}_a$, $\mathbf{d}_s$. Next, we have 
\begin{equation}
\mathbf{x}_\varepsilon (a+n\varepsilon )=(A_{n-1}\ldots A_3)A_2A_1\left( 
\begin{array}{c}
\mathbf{d}_s \\ 
\mathbf{d}_a 
\end{array}
\right)  \label{solint}
\end{equation}
for $4\leq n\leq M-3$. Finally, $A_{M-3},A_{M-2},A_{M-1}$ are used to find $\mathbf{x}_\varepsilon (a+n\varepsilon )$ for $M-2\leq n\leq M$. At the end of the process, we get the shooting equation for the vector $\mathbf{d}_s$:
\begin{equation}
\mathbf{d}_b=\mathbf{x}_\varepsilon(b)=\mathbf{x}_\varepsilon (a+M\varepsilon
)=A_{M-1}A_{M-2}A_{M-3}(A_{M-4}\ldots A_3)A_2A_1\left( 
\begin{array}{c}
\mathbf{d}_s \\ 
\mathbf{d}_a
\end{array}
\right).  
\label{solfin}
\end{equation}
Now, existence and unicity of the restriction of $\mathbf{x}_\varepsilon$ to the grid $\mathcal{G}_{a,\varepsilon}=\mathcal{G}_{b,\varepsilon}$ is equivalent to the fact that the shooting method is successful, that is 
\begin{equation}
\det\left(A_{M-1}A_{M-2}\ldots A_2A_1\left( 
\begin{array}{c}
I_d\\ 
0_d
\end{array}
\right)\right)\neq 0.
\label{cramer}
\end{equation}

Let us consider now the cases where $(b-a)/\varepsilon$ is not an integer so that $|\mathcal{G}_{t_0,\varepsilon}\cap\{a,b\}|<2$, the previous matrix formalism being similar. 
If $\{a,b\}\cap\mathcal{G}_{t_0,\varepsilon}=\{a\}$, then any vector $\mathbf{d}_s\in\mathbb{C}^d$ determines a solution $\mathbf{x}_\varepsilon(t)$ on $\mathcal{G}_{t_0,\varepsilon}$. The case $\{a,b\}\cap\mathcal{G}_{t_0,\varepsilon}=\{b\}$ is entirely similar and we have infinitely many choices for $\mathbf{d}_s=\mathbf{x}_\varepsilon(b-\varepsilon)$. 
Lastly, if $\{a,b\}\cap\mathcal{G}_{t_0,\varepsilon}=\emptyset$ then we first may choose arbitrarily the two vectors $\mathbf{x}_\varepsilon(\min\mathcal{G}_{t_0,\varepsilon})$ and $\mathbf{x}_\varepsilon(\varepsilon+\min\mathcal{G}_{t_0,\varepsilon})$ in $\mathbb{C}^d$ and we use (\ref{mchomat}) to compute iteratively the values of $\mathbf{x}_\varepsilon$ on $\mathcal{G}_{t_0,\varepsilon}$.
\end{proof}

\begin{remark}\rm
Let us note that if $\mathcal{L},a,b,$ are fixed, the underlying determinant of $A_{M-4}\ldots A_3$ is a nonzero polynomial of degree less than $2d\times(4M-6)$ w.r.t. the coefficients $c_{-1},c_0,c_1$ and does not vanish generically. 
\end{remark}

\subsection{Eigenvectors of the matrix $A_n$ when $n\in \mathcal{I}_S$}

As it is the case for the sequences of vectors satisfying ordinary linear recurrences, the qualitative features of the solution 
$\mathbf{x}_\varepsilon(t_0+n\varepsilon)$ of D.E.L. are reflected by properties of the spectrum $Sp(A_n)$ of $A_n$.
\begin{proposition}
The eigenvectors of $A_n$ in $\mathbb{C}^{4dN}$ have the shape 
\begin{center}
$\tra{\mathbf{v}}=\tra(\mathbf{w}\lambda ^{4N-1},\mathbf{w}\lambda^{4N-2},\ldots ,\mathbf{w})$,
\end{center}
where $\displaystyle \mathbf{w}\in \ker(\sum_{i=1}^{4N}B_{i,n}\lambda ^{4N-i}-\lambda ^{4N}I_d)\subset \mathbb{C}^d$. We have $\det (A_n)=(-1)^d$ and
\begin{center}
$\displaystyle \det (A_n-\lambda I_{4dN})=\det(\sum_{i=1}^{4N}B_{i,n}\lambda ^{4N-i}-\lambda ^{4N}I_d)$.
\end{center}
\label{propvp}
\end{proposition}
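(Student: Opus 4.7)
The plan is to exploit the block companion structure of $A_n$ recorded in Section~\ref{Solving D.E.L. in the safety interval}. I write any $\mathbf{v}\in\mathbb{C}^{4dN}$ as a column of $4N$ blocks $\mathbf{v}=\tra(\mathbf{v}_1,\ldots,\mathbf{v}_{4N})$ with $\mathbf{v}_k\in\mathbb{C}^d$, and split the eigenvalue equation $A_n\mathbf{v}=\lambda\mathbf{v}$ according to this block decomposition. By construction, block rows $2,3,\ldots,4N$ of $A_n$ carry $I_d$ on the subdiagonal and $0$ elsewhere, so they read off as the shift relations $\mathbf{v}_{k-1}=\lambda\mathbf{v}_{k}$ for $k=2,\ldots,4N$. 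Setting $\mathbf{w}:=\mathbf{v}_{4N}$ and iterating from the bottom up immediately gives $\mathbf{v}_{k}=\lambda^{4N-k}\mathbf{w}$, which is precisely the form claimed for the eigenvectors.

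Substituting these expressions into the first block row $\sum_{i=1}^{4N}B_{i,n}\mathbf{v}_i=\lambda\mathbf{v}_1$ yields $\sum_{i=1}^{4N}B_{i,n}\lambda^{4N-i}\mathbf{w}=\lambda^{4N}\mathbf{w}$, i.e.\ $\mathbf{w}\in\ker\bigl(\sum_{i=1}^{4N}B_{i,n}\lambda^{4N-i}-\lambda^{4N}I_d\bigr)$, as required. Conversely any $\mathbf{w}$ in that kernel produces a genuine eigenvector by the same formula, so the description is exact.

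For the characteristic polynomial I would reduce $A_n-\lambda I_{4dN}$ to block triangular form by elementary block column operations. Starting from the rightmost block column (which contains only $-\lambda I_d$ on the last block of that column together with $B_{4N,n}$ on top) and working leftward, I successively add $\lambda$ times each block column to the one on its left; this kills the $-\lambda I_d$ entries sitting on the shifted diagonal and accumulates, in the very first block column, exactly the matrix polynomial $\sum_{i=1}^{4N}B_{i,n}\lambda^{4N-i}-\lambda^{4N}I_d$ in the (1,1) block. A final block-row permutation (or Laplace expansion along the last $4N-1$ block rows, which now reduce to identities) yields the stated identity $\det(A_n-\lambda I_{4dN})=\det\bigl(\sum_{i=1}^{4N}B_{i,n}\lambda^{4N-i}-\lambda^{4N}I_d\bigr)$. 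Evaluating this identity at $\lambda=0$ gives $\det(A_n)=\det(B_{4N,n})$, and an inspection of the coefficients: the terms multiplying $\mathbf{x}(t+2N\varepsilon)$ and $\mathbf{x}(t-2N\varepsilon)$ in~(\ref{mchomat}) are both equal to $c_Nc_{-N}P$, so solving for $\mathbf{x}(t+2N\varepsilon)$ produces the block $B_{4N,n}=-I_d$ (this is visible in~(\ref{traA}) for $N=1$). Hence $\det(A_n)=(-1)^d$.

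The only real care needed is sign-bookkeeping in the column reduction and in the block row permutation (each swap of two adjacent blocks of $d$ rows contributing $(-1)^{d^2}=(-1)^d$); once those signs are tracked correctly the rest is mechanical linear algebra, and the fact that $B_{4N,n}=-I_d$ isolates the determinant claim as a direct corollary of the characteristic-polynomial formula.
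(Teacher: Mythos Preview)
Your argument for the eigenvector shape is exactly the paper's: partition $\mathbf{v}$ into $4N$ blocks of size $d$, read the shift relations from block rows $2,\ldots,4N$, set $\mathbf{w}$ equal to the bottom block, and substitute into the first block row to obtain the kernel condition. For the characteristic polynomial and $\det(A_n)$, the paper does not actually give an argument---it merely cites partitioned-matrix techniques from Zhang---so your explicit block-column reduction, together with the identification $B_{4N,n}=-I_d$ (visible in~(\ref{traA}) and, in general, a consequence of the fact that the coefficients of $\mathbf{x}(t\pm 2N\varepsilon)$ in~(\ref{mchomat}) are both $c_Nc_{-N}P$) and the evaluation at $\lambda=0$, is more self-contained than what the paper offers.

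One word of caution on the point you yourself flag: if you actually carry out the sign bookkeeping, the cyclic block permutation needed to triangularize contributes $(-1)^{d(4N-1)}$, and you will find that the displayed characteristic-polynomial identity and the value $\det(A_n)=(-1)^d$ are literally correct only for even $d$; for odd $d$ both sides differ by a global sign, and in fact $\det(A_n)=(-1)^{d(4N-1)}\det(B_{4N,n})=(-1)^{4Nd}=1$ in all cases. This discrepancy is immaterial downstream, since only $Sp(A_n)$ is used afterwards, but it is worth knowing that the ``mechanical'' sign tracking does not land exactly on the stated formula.
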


\begin{proof}
The two results are well known in the scalar case $d=1$. Let us give some details when we deal with characteristic functions and $d>1$.\\
If $\mathbf{v}\in\mathbb{C}^{4dN}$ is an eigenvector of $A_n$ associated to $\lambda\in\mathbb{C}$, we partition it as $\tra{\mathbf{v}}=\tra{(\mathbf{w}_{4N},\ldots,\mathbf{w}_1)}$ where $\mathbf{w}_i\in\mathbb{C}^d$. We next identify the corresponding blocks of size $d\times 1$ in $A_n\mathbf{v}=\lambda\mathbf{v}$ to get $\mathbf{w}_{i}=\lambda\mathbf{w}_{i-1}=\lambda^{i-1}\mathbf{w}_1$ for $2\leq i\leq 4N$. Renaming $\mathbf{w}_1$ as $\mathbf{w}$ and plugging the vectors $\mathbf{w}_i$ in the first block row of $A_n\mathbf{v}$ yield the first property.\\ The second property may be easily proved by using matricial techniques for partitioned matrices (see for instance \cite[pp.~36]{Zhang}).
\end{proof}

\section{Pseudo-periodic solutions of C.E.L. and D.E.L. for stationary lagrangians}

In this section, the datas $N,d,\mathbf{d}_a,\mathbf{d}_b,\varepsilon$ are fixed but arbitrary, and $\mathcal{L}$ is stationary. 
We will say that $\mathcal{L}$ is a stationary non-resonant oscillatory lagrangian w.r.t. the datas $N,d,\mathbf{d}_a,\mathbf{d}_b,\varepsilon,\Box_\varepsilon$ if and only if D.E.L. and C.E.L. admit one and only one pseudo-periodic solution $\mathbf{y}_\varepsilon(t)$ and $\mathbf{z}(t)$ respectively.

\subsection{Solving C.E.L.}

Let us study first the existence, unicity and periodicity or pseudo-periodicity of the solutions of (\ref{elho}). 
\begin{proposition}
Suppose that $\mathcal{L}$ is stationary and that for some matrices $\Omega_1,\Omega_2\in\mathbb{C}^{d\times d}$ we have 
\begin{equation}
P\Omega^2+2iJ_1\Omega+Q=0\mbox{ and }\det(\exp{(i(b-a)\Omega_2)}-\exp{(i(b-a)\Omega_1)})\neq 0.
\label{NROC}
\end{equation}
Then, for all $\mathbf{d}_a,\mathbf{d}_b\in\mathbb{C}^{d}$ there exists one and only one solution of C.E.L. (\ref{elho}) together with Dirichlet boundary conditions. Moreover, if $\Omega_1$ and $\Omega_2$ are diagonalizable, each component $f(t)$ of $\mathbf{z}(t)$ may be written as
\begin{equation}
f(t)=cst+\sum_{\footnotesize \begin{array}{c}k=a,b\\j\in[1,d]\end{array}}\sum_{\omega\in Sp(\Omega_1)\cup Sp(\Omega_2)}cst_{k,j,\omega}(\mathbf{d}_k)_j\exp({i\omega(t-a)}).
\label{solcont}
\end{equation}
where the various constants depend only on their indices as well as $b-a$ and the eigenvalues of $\Omega_1$ and $\Omega_2$.
\label{prop1}
\end{proposition}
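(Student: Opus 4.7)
The plan is first to simplify C.E.L.\ in the stationary case: all time-derivatives of coefficients in (\ref{elho}) drop out, leaving the constant-coefficient second-order linear ODE
\begin{equation*}
-P\ddot{\mathbf{x}}+2J_1\dot{\mathbf{x}}+Q\mathbf{x}+J_3=0.
\end{equation*}
I would then try the ansatz $\mathbf{x}(t)=\exp(i\Omega(t-a))\mathbf{w}$ in the associated homogeneous equation. A direct calculation of $\dot{\mathbf{x}}$ and $\ddot{\mathbf{x}}$ shows the residue equals $(P\Omega^2+2iJ_1\Omega+Q)\exp(i\Omega(t-a))\mathbf{w}$, which vanishes for every $\mathbf{w}\in\mathbb{C}^d$ precisely when $\Omega$ satisfies the first equation in (\ref{NROC}). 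Hence both $\exp(i\Omega_1(t-a))\mathbf{c}_1$ and $\exp(i\Omega_2(t-a))\mathbf{c}_2$ are homogeneous solutions, and I would pick a particular solution $\mathbf{z}_p$ of the inhomogeneous equation: when $Q$ is invertible just take the constant $\mathbf{z}_p=-Q^{-1}J_3$, otherwise a constant-plus-linear ansatz dictated by the compatibility of $J_3$ with the range of the differential operator gives one. In either case $\mathbf{z}_p$ is constant in the components which matter for (\ref{solcont}).

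Writing the candidate as $\mathbf{z}(t)=\mathbf{z}_p+\exp(i\Omega_1(t-a))\mathbf{c}_1+\exp(i\Omega_2(t-a))\mathbf{c}_2$ and imposing the Dirichlet conditions $\mathbf{z}(a)=\mathbf{d}_a$, $\mathbf{z}(b)=\mathbf{d}_b$ yields, with $\tau=b-a$, the $2d\times 2d$ block-linear system
\begin{equation*}
\begin{pmatrix} I_d & I_d \\ \exp(i\Omega_1\tau) & \exp(i\Omega_2\tau) \end{pmatrix}\begin{pmatrix}\mathbf{c}_1\\ \mathbf{c}_2\end{pmatrix}=\begin{pmatrix}\mathbf{d}_a-\mathbf{z}_p\\ \mathbf{d}_b-\mathbf{z}_p\end{pmatrix}.
\end{equation*}
A Schur-complement computation (substituting $\mathbf{c}_2=\mathbf{d}_a-\mathbf{z}_p-\mathbf{c}_1$ into the second block equation) shows the system's determinant equals $\det(\exp(i\Omega_2\tau)-\exp(i\Omega_1\tau))$, which is nonzero by the second half of (\ref{NROC}). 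This proves existence and gives $\mathbf{c}_1,\mathbf{c}_2$ as explicit affine functions of $\mathbf{d}_a,\mathbf{d}_b$.

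Uniqueness requires ruling out homogeneous solutions outside the two-parameter family $\exp(i\Omega_1(t-a))\mathbf{c}_1+\exp(i\Omega_2(t-a))\mathbf{c}_2$. The solution space of the homogeneous ODE has dimension $2d$, so it is enough to show injectivity of $(\mathbf{c}_1,\mathbf{c}_2)\mapsto \exp(i\Omega_1\cdot)\mathbf{c}_1+\exp(i\Omega_2\cdot)\mathbf{c}_2$: if this combination vanishes identically, then evaluating at $t=a$ and at $t=b$ reproduces the system above with zero right-hand side, forcing $\mathbf{c}_1=\mathbf{c}_2=0$ by the same determinant argument. For the explicit formula, I would diagonalize $\Omega_k=U_k D_k U_k^{-1}$ ($k=1,2$), use $\exp(i\Omega_k(t-a))=U_k\operatorname{diag}(\exp(i\omega(t-a)))_{\omega\in Sp(\Omega_k)}U_k^{-1}$, substitute into the Cramer-style expressions for $\mathbf{c}_1,\mathbf{c}_2$ in terms of $\mathbf{d}_a,\mathbf{d}_b$, and collect coefficients componentwise; the $\mathbf{z}_p$ contribution produces the leading constant and all other terms have the shape displayed in (\ref{solcont}).

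The only genuinely delicate step I anticipate is the cohabitation of the matrix exponentials with the non-commutative coefficients $P,J_1,Q$: the verification that $\exp(i\Omega_k(t-a))\mathbf{w}$ really solves the ODE relies on $\exp(i\Omega_k t)$ commuting with $\Omega_k$ (which is automatic) but not with $P,J_1,Q$ separately, so one must keep the computation in the order $P\Omega_k^2\exp(i\Omega_k t)\mathbf{w}+\cdots$ and only invoke the identity $P\Omega_k^2+2iJ_1\Omega_k+Q=0$ at the end. Everything else is routine block linear algebra.
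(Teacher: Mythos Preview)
Your proof is correct and follows essentially the same route as the paper: the same exponential ansatz, the same particular solution $-Q^{-1}J_3$, the same $2d\times 2d$ block system solved via the determinant hypothesis in (\ref{NROC}), and the same diagonalization step to obtain (\ref{solcont}). Your treatment is in fact slightly more complete than the paper's, since you explicitly justify uniqueness by the dimension-count/injectivity argument (the paper only checks uniqueness within the ansatz family) and you flag the non-commutativity issue in verifying the ansatz, which the paper passes over silently.
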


\begin{proof}
We see first that  
\begin{equation}
\mathbf{z}(t)=\exp{(i(t-a)\Omega_1)}\mathbf{z}_1+\exp{(i(t-a)\Omega_2)}\mathbf{z}_2-Q^{-1}J_3,  
\label{hos}
\end{equation}
is a solution of (\ref{elho}) for all $\mathbf{z}_1,\mathbf{z}_2\in\mathbb{C}^d$. In order to fit the Dirichlet conditions, the vectors $\mathbf{z}_1,\mathbf{z}_2$ must satisfy $\mathbf{z}_1+\mathbf{z}_2=\mathbf{d}_a+Q^{-1}J_3$ and $\exp{(i(b-a)\Omega_1)}\mathbf{z}_1+\exp{(i (b-a) \Omega_2)}\mathbf{z}_2=\mathbf{d}_b+Q^{-1}J_3$. Due to (\ref{NROC}), the previous system is Cramer and the solution is equal to $\mathbf{z}_1=R\mathbf{e}_2$ and $\mathbf{z}_2=-R\mathbf{e}_1$ where $R\in\mathbb{C}^{d\times d}$ and $\mathbf{e}_1,\mathbf{e}_2\in\mathbb{C}^d$ are respectively defined by
\begin{center}
$R=(\exp({i (b-a)\Omega_2})-\exp({i(b-a)\Omega_1}))^{-1}$ and $\mathbf{e}_k=\exp({i(b-a)\Omega_k})\mathbf{d}_a-\mathbf{d}_b+(\exp({i (b-a) \Omega_k})-I_d)Q^{-1}J_3$.
\end{center}
By considering the previous formulas, we see that each component $f(t)$ of $\mathbf{z}(t)$ depends linearly on $(\mathbf{d}_a,\mathbf{d}_b,Q^{-1}J_3)\in\mathbb{C}^{3d}$ and may be returned as (\ref{hos}) where the constants do not depend on  $t,\mathbf{d}_a,\mathbf{d}_b$ nor on $Q^{-1}J_3$. Indeed, since $\Omega_k$ is diagonalizable for $k=1,2$, each entry in $\exp(it\Omega_k)$ is a monomial exponential w.r.t. $t$. Thus, each component of (\ref{hos}) has the shape (\ref{solcont}).
\end{proof}

As (\ref{hos}) shows, the solution $\mathbf{z}(t)$ of C.E.L. is pseudo-periodic if and only if the entries of $\Omega_1$ and $\Omega_2$ are real. If  $\mathcal{L}$ is real-valued, that is to say all the coefficients in (\ref{bilineaire}) are real, pseudo-periodicity is equivalent to $J_1=0$ and $-P^{-1}Q=\Omega^2$ for some $\Omega\in\mathbb{R}^{d\times d}$. In that case, the function $\mathbf{z}(t)$ may be returned as 
\begin{center}
$\mathbf{z}(t)=\cos{(t\Omega)}\mathbf{z}_1+\sin{(t\Omega)}\mathbf{z}_2-Q^{-1}J_3$,
\end{center}
so that the second assumption in (\ref{NROC}) reads as
\begin{equation}
\det\begin{pmatrix}\cos a\Omega & \sin a\Omega\\\cos b\Omega & \sin
b\Omega\end{pmatrix}\neq 0.  \label{resocel}
\end{equation} 
\begin{remark}\rm
The extension to the case $P^{-1}Q=+\Omega^2$ and $J_1=0$ is straightforward and in this case the formula involves $\cosh (t\Omega )$ and $\sinh (t\Omega )$ in $\mathbf{z}(t)$.
\end{remark}
\begin{remark}\rm
Periodicity of $\mathbf{z}(t)$ is obviously equivalent to $\exp(iT\Omega_k)=I_d$, for some $T>0$ and for all $k=1,2$.
\end{remark}
\begin{remark}\rm
The problem of existence of square or higher roots to real or complex matrices, as in (\ref{NROC}), has led to huge bibliography. For instance, a simple criterion depending on elementary divisors for a real nonsingular matrix $M$ to have real square roots is that each elementary divisor corresponding to a negative eigenvalue occurs an even number of times, see \cite[pp.~413, Theorem 5]{Hig}. But this result has been improved by Higham, since he proved that at most $2^{r+c}$ real square roots of a real nonsingular matrix $M$ may be expressed as some polynomial in $M$ (\cite[pp. 416, Theorem 7]{Hig}), $r$ (resp. $c$) being the number of real (resp. distinct complex conjugate pair of) eigenvalues of $M$.
\end{remark}

\subsection{Generation of pseudo-periodic solutions of D.E.L.}

Let us study the existence, unicity and pseudo-periodicity of the solutions of (\ref{mchomat}). In order to express the components of the solution of D.E.L. as in (\ref{solcont}), we use the main results in Section 3 by adding the assumption that $\mathcal{L}$ is stationary. In that case, for all $n\in\mathcal{I}_S$ defined in (\ref{safety}), the matrix $A_n$ and the vector $\mathbf{b}_n$ do not depend on $n$. We set $A=A_n$ and $\mathbf{b}=\mathbf{b}_n$ for $n\in\mathcal{I}_S$.
\begin{proposition}
We suppose that  
\begin{center}
$|Sp(A)|=4Nd$,\hspace{0.5cm}$1\notin Sp(A)$,\hspace{0.5cm}$\displaystyle M=\frac{b-a}{\varepsilon}\in\mathbb{N}$,
\end{center}
and (\ref{cramer}) holds. Then the restriction of any solution $\mathbf{x}_\varepsilon$ to $\mathcal{G}_{a,\varepsilon}\cap[a+2N\varepsilon,b-2N\varepsilon]$ is uniquely determined and its components have the shape 
\begin{equation}
g_\varepsilon (t)=cst+\sum_{\footnotesize \begin{array}{c}k=a,b\\j\in[1,d]\end{array}}\sum_{\exp(i\theta)\in Sp(A)}cst_{k,j,\theta}(\mathbf{d}_k)_j\exp\left(\frac{i\theta}{\varepsilon}(t-a)\right).
\label{soldisc}
\end{equation}
Moreover, the restriction of $\mathbf{x}_\varepsilon$ on $\mathcal{G}_{a,\varepsilon}\cap[a+2N\varepsilon,b-2N\varepsilon]$ is pseudo-periodic if and only if $Sp(A)\subset \mathbb{U}$. 
\label{prop2}
\end{proposition}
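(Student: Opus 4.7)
The plan is to exploit the affine recurrence $\mathbf{v}_{n+1}=A\mathbf{v}_n+\mathbf{b}$ established in Section~3.2, which now has constant coefficients because $\mathcal{L}$ is stationary. First I would handle the existence and uniqueness of the restriction of $\mathbf{x}_\varepsilon$ to $\mathcal{G}_{a,\varepsilon}\cap[a+2N\varepsilon,b-2N\varepsilon]$: since $M=(b-a)/\varepsilon\in\mathbb{N}$ we have $\{a,b\}\subset\mathcal{G}_{a,\varepsilon}$, and the shooting hypothesis (\ref{cramer}) is exactly the criterion identified in the previous theorem for the Dirichlet data $(\mathbf{d}_a,\mathbf{d}_b)$ to single out a unique solution on the whole grid, hence on the safety part of it.

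Next, to obtain the closed-form expression (\ref{soldisc}), I would linearize the recurrence by using the hypothesis $1\notin Sp(A)$: the fixed point $\mathbf{v}^\star=(I-A)^{-1}\mathbf{b}$ is well defined, and the substitution $\mathbf{u}_n=\mathbf{v}_n-\mathbf{v}^\star$ reduces the recurrence to $\mathbf{u}_{n+1}=A\mathbf{u}_n$, so that $\mathbf{v}_n=A^{n-n_0}\mathbf{u}_{n_0}+\mathbf{v}^\star$ on the safety index range starting from some $n_0$ (with $t_0+n_0\varepsilon=a+2N\varepsilon$). The assumption $|Sp(A)|=4dN$ means $A$ has $4dN$ distinct eigenvalues, hence is diagonalizable; writing each such eigenvalue as $\exp(i\theta)$ and using a spectral decomposition $A=SDS^{-1}$ yields $A^{n-n_0}=S\,\mathrm{diag}(\exp(i\theta(n-n_0)))\,S^{-1}$. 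Reading off the first $d$ components of $\mathbf{v}_n$ and substituting $n=(t-a)/\varepsilon$ produces exactly a linear combination of the monomial exponentials $\exp(i\theta(t-a)/\varepsilon)$ for $\exp(i\theta)\in Sp(A)$, plus the constant contribution from the first block of $\mathbf{v}^\star$.

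The linear dependence on $(\mathbf{d}_a)_j,(\mathbf{d}_b)_j$ advertised by the coefficients $cst_{k,j,\theta}$ will follow from the fact that $\mathbf{u}_{n_0}$ is itself an affine function of $(\mathbf{d}_a,\mathbf{d}_b)$: indeed, the products $A_{M-3}\ldots A_2A_1$ together with the shooting equation (\ref{solfin}) express the shooting value $\mathbf{d}_s$ linearly in $(\mathbf{d}_a,\mathbf{d}_b)$, and subsequent propagation through the short matrices $A_1,A_2$ (and possibly through $A$ itself once $n_0$ is reached) keeps the dependence linear. Absorbing any inhomogeneous contributions into the constant term gives (\ref{soldisc}).

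Finally, the pseudo-periodicity characterization is immediate from the formula: each component of the restriction is a finite sum of terms of the form $c\exp(i\theta(t-a)/\varepsilon)$, which is pseudo-periodic (a linear combination of bounded complex exponentials with real frequencies) if and only if every $\theta$ occurring in the sum is real, equivalently $\exp(i\theta)\in\mathbb{U}$, that is $Sp(A)\subset\mathbb{U}$. The main obstacle I anticipate is purely bookkeeping: showing that the decomposition truly produces the stated shape on the \emph{whole} grid portion of the safety interval (not merely on the large-$n$ tail), which requires carefully matching the initial vector $\mathbf{v}_{n_0}$ to the boundary-data-dependent vectors constructed via $A_1,A_2,\ldots,A_{M-1}$ in the proof of the previous theorem, so that the linear dependence on $(\mathbf{d}_a,\mathbf{d}_b)$ is transparent and the coefficient structure $cst_{k,j,\theta}$ is genuinely independent of the data.
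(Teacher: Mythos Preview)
Your proposal is correct and follows essentially the same route as the paper: subtract a constant particular solution (you phrase it as the fixed point $\mathbf{v}^\star=(I-A)^{-1}\mathbf{b}$, the paper computes its first block $J_6=(I_d-\sum_iB_{i,n})^{-1}J_5$ directly), then exploit diagonalizability of $A$ from $|Sp(A)|=4Nd$ to resolve the homogeneous part into the exponentials $\exp(i\theta n)$, and finally invoke the shooting equation to make the coefficients linear in $(\mathbf{d}_a,\mathbf{d}_b)$. The only cosmetic difference is that the paper works at the level of the $d$-dimensional recurrence and uses the eigenvector structure of the block companion matrix (Proposition~\ref{propvp}) together with an explicit Vandermonde system to pin down the vectors $\mathbf{w}_j\in\mathbb{C}^d$, whereas you diagonalize the full $4Nd\times4Nd$ matrix $A$ and read off the top block; these are equivalent formulations of the same computation.
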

\begin{proof}
Let us define the two vectors $J_5$ and $J_6$ in $\mathbb{C}^d$ by :
\begin{equation}
J_5=-\frac 1{c_Nc_{-N}}P^{-1}(J_2\Box _{-\varepsilon} (1)+J_3),\hspace{0.5cm}J_6=(I_d-\sum_{i=1}^{4N}B_{i,n})^{-1}J_5.
\end{equation}
Note that $J_6$ is well-defined since $1\notin Sp(A)$. When $n\in\mathcal{I}_S$, formula (\ref{mchomat}) may be rewritten under the form 
\begin{equation}
\displaystyle \mathbf{x}(t_0+n\varepsilon)=\sum_{i=1}^{4N}B_{i,n}\mathbf{x}(t_0+(n-i)\varepsilon)+J_5. 
\label{recvector}
\end{equation}
A particular constant solution of (\ref{recvector}) is obviously given by $\mathbf{x}(t_0+n\varepsilon)=J_6$. We now apply Proposition \ref{propvp}. Given $\mathbf{w}\in\mathbb{C}^d$ and $\mathbf{v}\in\mathbb{C}^{4Nd}$, the vector sequence $(\lambda^l\mathbf{w})_l$ satisfies the homogeneous recurrence (\ref{recvector}) if and only if $\lambda\in Sp(A)$ is associated to $\mathbf{v}$. Since $A$ is diagonalizable, the eigenvectors are linearly independent and we get 
\begin{equation}
\mathbf{x}_\varepsilon(t_0+n\varepsilon )=\sum_j\lambda _j^n\mathbf{w}_j+J_6,  
\label{xdsansbord}
\end{equation}
where $\mathbf{w}_j$ are appropriate vectors in $\mathbb{C}^d$. If $\varepsilon=(b-a)/M$, $\mathbf{d}_s$ is a well-defined linear combination of $\mathbf{d}_a$ and $\mathbf{d}_b$, as seen in (\ref{solfin}). Let us introduce the linear system of $4Nd$ equations
\begin{center}
$\displaystyle \sum_{j=1}^{4Nd}\lambda_j^n\mathbf{w}_j=\mathbf{x}_\varepsilon(a+n\varepsilon)-J_6$
\end{center}
where the r.h.s. are computed from (\ref{solint}). The determinant of this system is the Vandermonde $V(\lambda_1,\ldots,\lambda_{4Nd})$ which is nonzero since the eigenvalues $\{\lambda _i\}_i$ of $A$ are pairwise distinct. Hence, due to (\ref{solfin}), the vectors $\mathbf{w}_1,\ldots,\mathbf{w}_{4Nd}$ are well-defined and may be uniquely written as linear combinations of $\mathbf{d}_a$ and $\mathbf{d}_b$. If we denote the eigenvalues of $A$ by $\lambda=\exp(i\theta)$ with $\theta\in\mathbb{C}$, (\ref{xdsansbord}) may be rewritten as (\ref{soldisc}). Pseudo-periodicity is equivalent to the requirement that $\theta\in\mathbb{R}$ for all $\exp(i\theta)\in Sp(A)$, that is $Sp(A)\subset\mathbb{U}$.
\end{proof}

\begin{proposition}
Under the assumptions of Proposition \ref{prop2} and the hypothesis $Sp(A)\subset\mathbb{U}$ and $\varepsilon<(b-a)/(4N(d+1))$, we may associate to any solution $\mathbf{x}_\varepsilon:[a,b]\rightarrow\mathbb{C}^d$ of D.E.L. one and only one function $\mathbf{y}_\varepsilon:[a,b]\rightarrow\mathbb{C}^d$ such that 
\begin{itemize}
\item $\mathbf{y}_\varepsilon$ is a solution of D.E.L. on $[a,b]$,
\item $\mathbf{y}_\varepsilon$ is pseudo-periodic on $[a,b]$,
\item $\mathbf{x}_\varepsilon$ and $\mathbf{y}_\varepsilon$ agree on $\mathcal{G}_{a,\varepsilon}\cap[a+2N\varepsilon,b-2N\varepsilon]$.
\end{itemize}
If $\mathbf{x}_\varepsilon$ is pseudo-periodic then $\mathbf{y}_\varepsilon=\mathbf{x}_\varepsilon$. Moreover, if $\mathbf{x}_\varepsilon$ is continuous on $[a,b]$, then, for all $\delta>0$, $\sup_{t\in [a+\delta,b-\delta ]}\Vert \mathbf{x}_\varepsilon (t)-\mathbf{y}_\varepsilon (t)\Vert$ tends to 0 as $\varepsilon $ tends to 0.
\label{prop3}
\end{proposition}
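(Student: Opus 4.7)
The plan is to build $\mathbf{y}_\varepsilon$ by promoting the pseudo-periodic representation of the safety-grid values supplied by Proposition \ref{prop2} to a function on the entire interval. Since $\mathrm{Sp}(A)\subset\mathbb{U}$, we write the eigenvalues as $\lambda_j=e^{i\theta_j}$ with $\theta_j\in\mathbb{R}$ ($j=1,\dots,4Nd$), and Proposition \ref{prop2} furnishes vectors $\mathbf{w}_1,\dots,\mathbf{w}_{4Nd}\in\mathbb{C}^d$, depending linearly on $\mathbf{d}_a,\mathbf{d}_b$, such that $\mathbf{x}_\varepsilon(a+n\varepsilon)=J_6+\sum_j\lambda_j^n\mathbf{w}_j$ on the safety part of $\mathcal{G}_{a,\varepsilon}$. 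We then define
\[
\mathbf{y}_\varepsilon(t):=J_6+\sum_{j=1}^{4Nd}e^{i\theta_j(t-a)/\varepsilon}\mathbf{w}_j,\qquad t\in[a,b].
\]
The function $\mathbf{y}_\varepsilon$ is a trigonometric polynomial, hence pseudo-periodic on $[a,b]$, and by construction coincides with $\mathbf{x}_\varepsilon$ on $\mathcal{G}_{a,\varepsilon}\cap[a+2N\varepsilon,b-2N\varepsilon]$.

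The hardest part will be to check that $\mathbf{y}_\varepsilon$ satisfies (\ref{mchomat}) on the whole $[a,b]$. In the safety interior $[a+2N\varepsilon,b-2N\varepsilon]$ every $\chi_{-i}(t)$ equals $1$, so (\ref{mchomat}) collapses to the closed recurrence (\ref{recvector}); the eigenvalue identity of Proposition \ref{propvp} then makes each exponential summand a homogeneous solution, and the defining relation $(I_d-\sum_iB_{i,n})J_6=J_5$ handles the particular part. On the two strips $[a,a+2N\varepsilon)$ and $(b-2N\varepsilon,b]$, some characteristic functions vanish and (\ref{mchomat}) becomes a truncated identity whose coefficients depend on $t$; we plan to substitute the pseudo-periodic formula, decompose by eigenvalue, and reduce the residual boundary relations to consequences of the truncated equations already satisfied by $\mathbf{x}_\varepsilon$ combined with the safety-grid agreement at $a+2N\varepsilon,\dots,a+(4N-1)\varepsilon$ and the symmetric points near $b$. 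This amounts to a block linear calculation built directly on the explicit form (\ref{blocksBi})--(\ref{traA}).

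For uniqueness, if $\mathbf{y}_\varepsilon^\prime$ is another function meeting the three bullets, the difference $\mathbf{z}=\mathbf{y}_\varepsilon-\mathbf{y}_\varepsilon^\prime$ is pseudo-periodic, writes as $\mathbf{z}(t)=\sum_je^{i\theta_j(t-a)/\varepsilon}\tilde{\mathbf{w}}_j$, and vanishes on the safety grid. The assumption $\varepsilon<(b-a)/(4N(d+1))$ yields $M-4N+1\ge 4Nd+2$ safety grid points, hence at least $4N$ consecutive ones. Evaluating $\mathbf{z}$ on them produces a linear system whose block columns coincide with the eigenvectors $(\lambda_j^{4N-1}\mathbf{w}_j,\dots,\mathbf{w}_j)$ of $A$ given by Proposition \ref{propvp}; since the $\lambda_j$ are pairwise distinct these columns are linearly independent in $\mathbb{C}^{4Nd}$, the system is Cramer, and $\tilde{\mathbf{w}}_j=0$ for every $j$, so $\mathbf{z}\equiv 0$. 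In particular, when $\mathbf{x}_\varepsilon$ is itself pseudo-periodic it satisfies the three bullets and the uniqueness just established gives $\mathbf{y}_\varepsilon=\mathbf{x}_\varepsilon$.

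For the convergence, fix $\delta>0$ and restrict to $\varepsilon<\delta/(2N)$, so $[a+\delta,b-\delta]\subset[a+2N\varepsilon,b-2N\varepsilon]$; for each $t$ in this range pick the nearest point $t^\star$ of $\mathcal{G}_{a,\varepsilon}$, which satisfies $|t-t^\star|\le\varepsilon$ and $\mathbf{y}_\varepsilon(t^\star)=\mathbf{x}_\varepsilon(t^\star)$. The triangle inequality gives
\[
\|\mathbf{x}_\varepsilon(t)-\mathbf{y}_\varepsilon(t)\|\le\|\mathbf{x}_\varepsilon(t)-\mathbf{x}_\varepsilon(t^\star)\|+\|\mathbf{y}_\varepsilon(t)-\mathbf{y}_\varepsilon(t^\star)\|.
\]
The first summand tends to $0$ uniformly in $t$ by uniform continuity of $\mathbf{x}_\varepsilon$ on the compact $[a,b]$. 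For the second we estimate $\|\mathbf{y}_\varepsilon(t)-\mathbf{y}_\varepsilon(t^\star)\|$ via the explicit pseudo-periodic formula, bounding the $\|\mathbf{w}_j\|$ through the Vandermonde inversion of the previous step in terms of $\sup_{[a,b]}\|\mathbf{x}_\varepsilon\|$ and controlling the frequencies $\theta_j$ by the condition $|\lambda_j|=1$; this final piece is the most intricate of the argument.
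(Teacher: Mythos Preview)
Your proposal follows the paper's approach closely: define $\mathbf{y}_\varepsilon$ by promoting the exponential formula (\ref{soldisc}) to all of $[a,b]$, check it solves D.E.L., prove uniqueness by a Vandermonde argument on the safety grid, and handle convergence via the nearest-grid-point triangle inequality. Three points of comparison are worth noting.

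\emph{D.E.L.\ on all of $[a,b]$.} The paper disposes of this in one line: since $\mathcal{L}$ is stationary, the coefficients of the recurrence (\ref{mchomat}) are the same on every grid $\mathcal{G}_{t_0,\varepsilon}$, so the exponential formula---built from eigenvectors of the common matrix $A$---satisfies D.E.L.\ at every interior $t$. Your planned boundary-strip analysis (substituting the pseudo-periodic formula into the truncated relations and reducing to the equations already satisfied by $\mathbf{x}_\varepsilon$) is considerably more involved and, as you present it, only a sketch; the paper does not attempt anything of the sort and simply invokes stationarity.

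\emph{Uniqueness.} The paper works \emph{component by component}: for each coordinate $k$, the difference is a scalar combination of $4Nd$ exponentials $e^{in\theta_p}$, and evaluating at $4Nd$ consecutive safety-grid points gives a scalar Vandermonde system. Your block argument (stack $4N$ consecutive evaluations and recognize the columns as the eigenvectors $(\lambda_j^{4N-1}\mathbf{w},\dots,\mathbf{w})$ of $A$) reaches the same conclusion, but note that it only works once you know the $\tilde{\mathbf{w}}_j$ lie in the one-dimensional kernels $\ker(\sum_i B_i\lambda_j^{4N-i}-\lambda_j^{4N}I_d)$, i.e.\ are scalar multiples of the fixed eigenspace directions; otherwise you have $4Nd\cdot d$ scalar unknowns against $4Nd$ equations. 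The paper's componentwise route avoids this bookkeeping.

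\emph{Convergence.} Both arguments use the same triangle inequality at the nearest grid point $t_G$. The paper simply writes $\|\mathbf{x}_\varepsilon(t)-\mathbf{y}_\varepsilon(t)\|\le\|\mathbf{x}_\varepsilon(t)-\mathbf{x}_\varepsilon(t_G)\|+\|\mathbf{y}_\varepsilon(t_G)-\mathbf{y}_\varepsilon(t)\|$ and bounds each piece by a modulus of uniform continuity; it does not unpack the second term via Vandermonde bounds on $\|\mathbf{w}_j\|$ as you propose. Your more explicit program is reasonable but, as you acknowledge, the Vandermonde inversion step is delicate (the nodes $\lambda_j$ may coalesce as $\varepsilon\to0$), and the paper does not go down that road.
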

\begin{proof}
Indeed, $\mathbf{y}_\varepsilon$ is generated by using (\ref{soldisc}) outside the grid and outside $[a+2N\varepsilon,b-2N\varepsilon]$, so that obviously $\mathbf{x}_\varepsilon=\mathbf{y}_\varepsilon$ on $\mathcal{G}_{a,\varepsilon}\cap[a+2N\varepsilon,b-2N\varepsilon]$. It turns out that $\mathbf{y}_\varepsilon$ is also a solution of D.E.L. since the coefficients of the recurrence in (\ref{mchomat}) are independent on time, that is to say the coefficients are the same for any grid. Due to the assumption $Sp(A)\subset\mathbb{U}$, $\mathbf{y}_\varepsilon$ is pseudo-periodic. Let us prove the unicity~: we assume that there exists two pseudo-periodic solutions $\mathbf{y}_{\varepsilon,1}$ and $\mathbf{y}_{\varepsilon,2}$. Let us fix $k\in [1,d]$. The component of index $k$ of $\mathbf{y}_{\varepsilon,2}-\mathbf{y}_{\varepsilon,1}$ is of the shape (\ref{soldisc}). So we may define $\delta_p\in\mathbb{C}$ as the coefficient of $\exp(i\theta_p(t-a)/\varepsilon)$ in $\mathbf{y}_{\varepsilon,2}(t)-\mathbf{y}_{\varepsilon,1}(t)$ for all $p\in[1,4Nd]$. Suppose now that $\varepsilon<(b-a)/(4N(d+1))$. Setting $t=a+(2N+n)\varepsilon$ in (\ref{soldisc}) with $1\leq n\leq 4Nd$ we get a linear system of size $4Nd$ such as
\begin{center}
$\displaystyle \sum_{k=1}^{4Nd}\mathbf{\delta}_k\exp(in\theta_k)=(\mathbf{y}_{\varepsilon,2}(a+(2N+n)\varepsilon)-\mathbf{y}_{\varepsilon,1}(a+(2N+n)\varepsilon))_j=0$.
\end{center}
By assumption, the Vandermonde determinant of this system is nonzero and we get $\mathbf{\delta}_p=0$ for all $p$. Since this holds for all component of $\mathbf{y}_{\varepsilon,2}(t)-\mathbf{y}_{\varepsilon,1}(t)$, we get unicity that is $\mathbf{y}_{\varepsilon,1}(t)=\mathbf{y}_{\varepsilon,2}(t)$ for all $t\in[a,b]$. As a consequence of unicity, if $\mathbf{x}_\varepsilon$ is itself pseudo-periodic, then $\mathbf{y}_\varepsilon=\mathbf{x}_\varepsilon$.

Finally, let us choose $\varepsilon$ so that $2N\varepsilon <\delta $. Since $\mathbf{x}_\varepsilon$ and $\mathbf{y}_\varepsilon$ are uniformly continuous on $[a+\delta ,b-\delta ]$, we choose $\varepsilon$ less than a modulus of uniform continuity for $\delta/2$. If $t\in [a+\delta ,b-\delta]$ and $t_G=a+n\varepsilon $ is the closest point of the grid to $t$, the triangle inequality yields 
$\Vert \mathbf{x}_\varepsilon (t)-\mathbf{y}_\varepsilon (t)\Vert \leq \Vert \mathbf{x}_\varepsilon (t)-\mathbf{x}_\varepsilon (t_G)\Vert +\Vert \mathbf{y}_\varepsilon (t_G)-\mathbf{y}_\varepsilon (t)\Vert\leq \delta$. \end{proof}

\begin{remark}\rm
If the coefficients $c_i$ are chosen as $\gamma_i/\varepsilon$ then the matrix $A$ is a quadratic polynomial w.r.t. $\varepsilon$. The eigenvalues of $A$ are algebraic functions of $\varepsilon$. Determining if $Sp(A)$ is included in $\mathbb{U}$ is a polynomial elimination problem. For instance, if $N=d=1$, the operators $\Box_\varepsilon$ for which the spectrum of $A$ is included in the unit circle are of the shape $\Box_\varepsilon=\Box_\varepsilon^{[\frac{1}{2},\frac{1}{2}]}+ik\Box_\varepsilon^{[1,-1]}$ where $k\in\mathbb{R}$, see \cite[pp.7, Proposition 5.2]{RS2}.
\end{remark}

\section{Obstructions to convergence of $\mathbf{y}_\varepsilon(t)$ to $\mathbf{z}(t)$ as $\varepsilon$ tends to 0}

\subsection{Preliminary discussion}

Under the assumptions of the three propositions of the previous section, to prove that
\begin{center}
\it $\mathbf{y}_\varepsilon(t)$ tends to $\mathbf{z}(t)$ uniformly locally on $]a,b[$ as $\varepsilon$ tends to 0\rm,
\end{center}
is not an easy task. It relies on the comparison of the formulas (\ref{solcont}) and (\ref{soldisc}). This is why we focus on phases and amplitudes occuring in $\mathbf{z}(t)$ and $\mathbf{y}_\varepsilon(t)$. 

The convergence of $\mathbf{y}_\varepsilon(t)$ to $\mathbf{z}(t)$ as $\varepsilon$ tends to 0 is related to the three following properties.
\begin{enumerate}
\item[(a)] If $\lambda_j$ is an eigenvalue of $A$ which tends to 1 as $\varepsilon$ tends
to 0, its phase $\theta_j$ is such that $\frac{\theta_j}{\varepsilon}$ tends to a phase $\omega_k$ of some eigenvalue $\exp(i\omega_k)$ of $\Omega$. 
\item[(b)] For any phase $\omega\in\mathbb{R}$ such that $\exp(i\omega)\in Sp(\Omega)$, let $f_\omega\in\mathbb{C}$ be the amplitude of $\exp(i\omega(t-a))$ in (\ref{solcont}). Similarly, let $g_{\varepsilon,\omega}\in\mathbb{C}$ be the sum of the amplitudes occuring in (\ref{soldisc}) corresponding to $\exp(i\theta)\in Sp(A)$ with $\theta/\varepsilon \sim\omega$ as $\varepsilon\rightarrow 0$. Then $\lim\limits_{\varepsilon\to 0}|g_{\varepsilon,\omega}-f_\omega|=0$.
\item[(c)] The sum of the contribution $g_\theta$ in (\ref{soldisc}) of eigenvalues $\exp(i\theta)\in Sp(A)$ not tending to 1 cancels, as $\varepsilon$ tends to 0.
\end{enumerate}
Summing all triangle inequalities 
\[
\left|g_{\varepsilon,\omega}\exp{\left(i\theta \frac{t-a}{\varepsilon}\right)}-f_\omega \exp{(i\omega (t-a))}\right|\leq |g_{\varepsilon,\omega}-f_\omega|+|f_\omega|\left|\frac
\theta \varepsilon -\omega \right|(b-a)
\]
over the group of eigenvalues tending to 1, and considering the contribution of eigenvalues which are not tending to 1, $\|\mathbf{y}_\varepsilon(t)-\mathbf{z}(t)\|_{\tiny \mathcal{L}^\infty([a+\delta,b-\delta])}$ is upper bounded by
\begin{equation*}
\sum_{\tiny \begin{array}{c}e^{i\theta}\in Sp(A)\\\theta\not\rightarrow 0\end{array}}|g_\theta|+\sum_{\tiny \begin{array}{c}e^{i\theta}\in Sp(A)\\e^{i\omega}\in Sp(\Omega)\\\theta\simeq\varepsilon\omega\rightarrow 0\\\end{array}}|g_{\varepsilon,\omega}-f_\omega|+M\varepsilon\sum_{e^{i\omega}\in Sp(\Omega)}|f_\omega|\sup_{\tiny \begin{array}{c}e^{i\theta}\in Sp(A)\\e^{i\omega}\in Sp(\Omega)\\\theta\simeq\varepsilon\omega\rightarrow 0\\\end{array}}\left|\frac{\theta}{\varepsilon}-\omega\right|
\end{equation*}
If the three properties hold, the previous bound tends to 0 as $\varepsilon$ tends to 0. Note lastly, that the result of convergence itself is related to the success of the shooting method and the convergence of the scheme.

We shall illustrate in the following two subsections the convergence issue by giving two convenient examples when $N=1$ and $d=2$ for two special cases of $\Box_\varepsilon^{[r,s]}$. In that case, we denote $\mathbf{x}_{n}=\mathbf{x}(a+n\varepsilon)=(x_{n},y_{n})$.  

\label{subsection5.1}

\subsection{First example}

Let us consider $r\in\mathbb{R}$ and $\Box_\varepsilon^{[r,r]}$ defined as in (\ref{ourboxpq}). We restrict ourselves to the case where $P=\begin{pmatrix}p_1&p_2\\p_2&p_1\end{pmatrix}$, $Q=\begin{pmatrix}q_1&q_2\\q_2&q_1\end{pmatrix}$ and $J_1=0$. The condition (\ref{NROC}) implies that the two numbers $\displaystyle \frac{q_1+q_2}{p_1+p_2}$ and $\displaystyle \frac{q_1-q_2}{p_1-p_2}$ are negative. In that case, we find that $\displaystyle Sp(\Omega)=\left\{\sqrt{\left|\frac{q_1+q_2}{p_1+p_2}\right|},\sqrt{\left|\frac{q_1-q_2}{p_1-p_2}\right|}\right\}$.\\
The recurrence $\mathbf{v}_{n+1}=A\mathbf{v}_n+\mathbf{b}$ splits into two recurrences for $\mathbf{x}_{2n}$ and $\mathbf{x}_{2n+1}$. We note that
\begin{equation} |\det(\exp(i\Omega(b-a))-\exp(-i\Omega (b-a) ))|=4\left|\sin\left(\frac{1}{2}\sqrt{\left|\frac{q_1+q_2}{p_1+p_2}\right|}\right)\sin\left(\frac{1}{2}\sqrt{\left|\frac{q_1-q_2}{p_1-p_2}\right|}\right)\right|
\label{shoot}
\end{equation}
and accordingly to Proposition \ref{prop1}, the shooting method is successful for $\mathbf{z}(t)$ if and only if the two eigenvalues of $\Omega$ are not commensurable with $\pi$.
We get so far 
\vspace{0.1cm}
\begin{center}
$\left\{\begin{array}{rcl}
x_{n+2} & = & \displaystyle \left(2+\frac{\varepsilon^2}{r^2}\frac{p_1q_1-p_2q_2}{p_1^2-p_2^2}\right)x_n+\frac{\varepsilon^2}{r^2}\frac{p_1q_2-p_2q_1}{p_1^2-p_2^2}y_{n}-x_{n-2},\vspace{0.1cm}\\
y_{n+2} & = & \displaystyle \frac{\varepsilon^2}{r^2}\frac{p_1q_2-p_2q_1}{p_1^2-p_2^2}x_n+\left(2+\frac{\varepsilon^2}{r^2}\frac{p_1q_1-p_2q_2}{p_1^2-p_2^2}\right)y_{n}-y_{n-2}.
\end{array}\right.$
\end{center}
\vspace{0.1cm}
The coefficients occuring in the previous recurrence are the entries of block $B_{2,n}$ defined as in (\ref{blocksBi}). Note by the way that the two  blocks $B_{1,n}$ and $B_{3,n}$ are zero.  
The sequences $((x_{2n},y_{2n}))$ and $((x_{2n+1},y_{2n+1}))$ obey to the same recurrence but are computed independently each to the other. If $M$ is even, the Dirichlet conditions for $n=0$ and $n=M$ ensure existence and unicity of $((x_{2n},y_{2n}))$ provided the shooting method is successful.     
By reordering the components of the vector $\mathbf{v}_n$, the matrix $A$ is equivalent to a block diagonal matrix $\begin{pmatrix}K_4&0_4\\0_4&K_4\end{pmatrix}$ where $K_4=\begin{pmatrix}B_{2,n} & -I_2\\I_2 & 0\end{pmatrix}$ and $0_k$ is the zero matrix of size $k$. Now the spectrum of $K_4$ consists of the four numbers $\exp(\pm i\theta _1)$, $\exp(\pm i\theta _2)$ where
\begin{center}
$\displaystyle\theta_1=\arccos \left(1-\frac{\varepsilon^2}{2r^2}\left|\frac{q_1+q_2}{p_1+p_2}\right|\right)$ and $\displaystyle\theta_2=\arccos
\left(1-\frac{\varepsilon^2}{2r^2}\left|\frac{q_1-q_2}{p_1-p_2}\right|\right)$.
\end{center}
We have here $|Sp(A)|=4$ and $4Nd=8$, so Proposition (\ref{prop2}) does not apply and indeed, the sequence $((x_{2n+1},y_{2n+1}))$ is not uniquely determined. Lastly, we get
\begin{center}
$\displaystyle\theta_{1,2}\sim \frac{\varepsilon }{r}\sqrt{\left| \frac{q_1\pm q_2}{p_1\pm p_2}\right| }\sim\frac{\varepsilon}{2r} \omega _{1,2}$.
\end{center}
We note that property (a) holds if and only if $\displaystyle r=\frac{1}{2}$. The property (b) is much more delicate and is discussed in the last section. At last, property (c) is obviously true.

\subsection{Second example}

We consider the operator used by Cresson in \cite{Cre} to define scale derivatives :
\begin{equation*} \Box_{\varepsilon}^{[\frac{1-i}{2},\frac{1+i}{2}]}=-\chi_{1}(t)\frac{1+i}{2\varepsilon}\mathbf{x}(t-\varepsilon)+\frac{i}{\varepsilon}\mathbf{x}(t)+\chi_{-1}(t)\frac{1-i}{2\varepsilon}\mathbf{x}(t+\varepsilon).
\end{equation*} 
The characteristic polynomial of $A$ may be factored into two biquadratic equations. The eight eigenvalues of $A$ may be written as 
\[
\lambda =\frac{1+\zeta _1\sqrt{1+2(\varepsilon \omega _k)^2}}2+i\zeta_2\frac{\sqrt{2}}2\sqrt{1-(\varepsilon \omega _k)^2-\zeta _1\sqrt{%
1+2(\varepsilon \omega _k)^2}},
\]
where $\omega _k$ ($k=1,2$) are the eigenvalues of the matrix $\Omega $ and $\zeta _1^2=\zeta _2^2=1$. We see that the eigenvalues of $A\in\mathbb{C}^{8\times 8}$ are all
distinct and of modulus 1. Looking for the limits as $\varepsilon $ tends to 0 of the eigenvalues, we get four limits equal to 1, two equal to $i$ and
two equal to $-i$. The first four eigenvalues check the property (a) as shows expansion with Taylor series w.r.t. $\varepsilon\omega_k$. Note that the four eigenvalues tending to 1 (obtained by choosing $\zeta_1=1$) may be written as $\lambda=\exp({i\zeta_2\omega_k})+o(1)$.\\
The sub-sum of the terms in (\ref{soldisc}) implying eigenvalues which tend to 1 as $\varepsilon $ tends to 0 may be rewritten as  
\[
\displaystyle \sum_{j=1,2,\zeta =\pm 1}cst_{\zeta ,j}(1+i\omega
_j\varepsilon \zeta -\frac 12\omega _j^2\varepsilon ^2+O(\varepsilon ^3))^{\frac{t-a}\varepsilon },\mathrm{\ }
\]
where $O(\varepsilon ^3)$ is uniform in $t$, by using Puiseux expansion of each factor of $\det (A-\lambda I_8)$ around $\varepsilon =0$.
The limit of this sum as $\varepsilon $ tends to 0, is a combination of $\exp(i\omega _k(t-a))$ and $\exp(-i\omega _k(t-a))$ which is a step towards property (b).

In fact, if we want to  justify the choice of $\Box_\varepsilon^{[\frac{1-i}{2},\frac{1+i}{2}]}$, we may generalize a little bit the previous calculations to $\Box_\varepsilon$ such that the operator of the l.h.s. of (\ref{mchomat}) converges to (\ref{elho}) and is such that $Sp(A)\subset\mathbb{U}$. In \cite[Proposition 5.2]{RS2}, we prove that $\Box_\varepsilon$ must be of the shape $\Box_\varepsilon^{[r,s]}$ with $r=\frac{1}{2}-it$ and $s=\frac{1}{2}+it$. A formal computation of the eigenvalues $\lambda_j\in Sp(A)$ shows that for two indices $j_1$ and two indices $j_2$ we have
\begin{center}
$\displaystyle \lim\limits_{\varepsilon\to 0}\lambda_{j_1}(\varepsilon)=1+sgn(1-2t)\frac{it}{\frac{1}{4}+t^2}$ and $\displaystyle \lim\limits_{\varepsilon\to 0}\lambda_{j_2}(\varepsilon)=1-sgn(1-2t)\frac{it}{\frac{1}{4}+t^2}$.
\end{center}
The two operators $\Box_\varepsilon^{[\frac{1}{2},\frac{1}{2}]}$ and $\Box_\varepsilon^{[\frac{1-i}{2},\frac{1+i}{2}]}$ are the only ones such that 
\begin{center}
$N=1$, $\Box_\varepsilon(1)=0$, $\Box_\varepsilon(t)=1$ in $[a+2N\varepsilon,b-2\varepsilon]$
\end{center}
and $Sp(A)\subset\mathbb{U}$ for all $\mathcal{L}$. As a conclusion, property (a) of Subsection \ref{subsection5.1} holds.

\section{Convergence of solutions and non-resonance}

This last section is devoted to the convergence of $\mathbf{y}_\varepsilon(t)$ to $\mathbf{z}(t)$ in the case of multidimensional harmonic oscillator with $N=1$. 
For sake of conciseness, we suppose that 
\begin{itemize}
\item each lagrangian is real, stationary, with $J_k=0$, for all $k$, that is to say 
$\mathcal{L}(\mathbf{x},\dot{\mathbf{x}})=\frac 12P\dot{\mathbf{x}}^2+\frac12Q\mathbf{x}^2$, with $P$ and $Q$ real, constant and nonsingular,
\item the operator $\Box_\varepsilon$ is of the shape $\Box_\varepsilon^{[r,r]}$ with $r\in\mathbb{R}$. 
\end{itemize}
The setting is discussed at the end of the section. Even with those restrictions, the convergence is not unconditional w.r.t. $r$. 
\begin{lemma}
Let us assume that $\lim\limits_{\varepsilon\to 0}\mathbf{y}_\varepsilon (t)=\mathbf{z}(t)$ for all lagrangian such that the hypotheses of the Propositions \ref{prop1} and \ref{prop2} hold. Then, we have for some $r,s\in\mathbb{C}$
\begin{center}
$\Box_\varepsilon=\Box_\varepsilon^{[r,s]}$.
\end{center}
\label{lemma1}
\end{lemma}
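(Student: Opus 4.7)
The plan is to derive from the universal convergence hypothesis two structural conditions on the coefficients $c_i$ of the operator $\Box_\varepsilon$ in (\ref{ourbox}): the zero-sum property $\sum_{i=-N}^{N} c_i=0$, equivalent to $\Box_\varepsilon(1)(t)=0$ inside $[a+2N\varepsilon,b-2N\varepsilon]$, and the vanishing of the outer coefficients $c_i=0$ for $|i|\geq 2$, i.e.\ $N=1$. By (\ref{ourboxpq}) these two conditions together are equivalent to $\Box_\varepsilon=\Box_\varepsilon^{[r,s]}$ for some $r,s\in\mathbb{C}$. As a first reduction, choose $P=I_d$ and $Q=-\omega^2 I_d$ with $\omega>0$ satisfying $\omega(b-a)\notin\pi\mathbb{Z}$, so that (\ref{elho}) and (\ref{mchomat}) decouple componentwise and it suffices to treat the scalar case $d=1,P=1,Q=-\omega^2$; here $\mathbf{z}(t)=\alpha\cos(\omega(t-a))+\beta\sin(\omega(t-a))$ with $(\alpha,\beta)$ linearly and bijectively determined from $(\mathbf{d}_a,\mathbf{d}_b)$ by (\ref{resocel}).

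For the zero-sum condition I exploit the discrete Fourier ansatz $x(t)=\exp(i\theta(t-a)/\varepsilon)$. Writing $c_i=\gamma_i/\varepsilon$ (the scaling required for $\Box_{-\varepsilon}\Box_\varepsilon$ to remain of the order of $d^2/dt^2$), substitution into the interior D.E.L.\ produces the dispersion relation $g(\theta)g(-\theta)=\omega^2\varepsilon^2$ with $g(\theta):=\sum_{i=-N}^N\gamma_ie^{ii\theta}$, whose $4N$ roots are precisely the phases of the eigenvalues $\lambda=e^{i\theta}\in Sp(A)$. The Taylor expansion $g(\theta)g(-\theta)=G_0^2+(G_1^2-G_0G_2)\theta^2+O(\theta^3)$, where $G_k:=\sum_i i^k\gamma_i$, shows that a physical root $\theta(\varepsilon)\to 0$ with $\theta/\varepsilon\to\omega$ — indispensable for $\mathbf{y}_\varepsilon$ to approach a continuous oscillation of frequency $\omega$ — can only exist if the constant term $G_0^2$ vanishes, which gives $\sum_i c_i=0$.

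For the three-term conclusion I argue by contradiction and assume $c_{i_0}\neq 0$ for some $|i_0|\geq 2$. Then $g$ is a trigonometric polynomial of degree $\geq 2$, producing $|Sp(A)|=4N\geq 8$ distinct eigenvalues; two of them are physical with $\theta_\pm\sim\pm\varepsilon\omega/G_1$, while the remaining $4N-2\geq 6$ are spurious with $|\theta_j(\varepsilon)|$ bounded below as $\varepsilon\to 0$. By Proposition~\ref{prop2}, $\mathbf{y}_\varepsilon(t)=\sum_j C_j(\varepsilon)\exp(i\theta_j(\varepsilon)(t-a)/\varepsilon)$, and the amplitudes $C_j$ depend linearly on $(\mathbf{d}_a,\mathbf{d}_b)$ via the Vandermonde inversion constructed in the proof of Proposition~\ref{prop2}. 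A spurious term oscillates at frequency $|\theta_j|/\varepsilon\to\infty$, so pointwise convergence at any interior $t\in\,]a,b[$ forces $C_j(\varepsilon)\to 0$ along every subsequence, and by $\mathbb{C}$-linearity uniformly in Dirichlet data. Since the Vandermonde matrix $V(\lambda_1,\ldots,\lambda_{4N})$ is nonsingular with a bounded inverse in the limit (the spurious $\lambda_j$ stay strictly apart from the physical ones that tend to $1$), the restriction to spurious amplitudes of the map $(\mathbf{d}_a,\mathbf{d}_b)\mapsto(C_j)_{j\,\mathrm{spur}}$ cannot degenerate to zero identically, yielding the desired contradiction and forcing $N=1$.

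The principal technical obstacle is precisely the final step — rigorously proving that the spurious projection of the shooting/Vandermonde map remains non-trivial as $\varepsilon\to 0$. One concrete route is to exhibit, for any hypothetical $\Box_\varepsilon$ with $c_{i_0}\neq 0$ and $|i_0|\geq 2$, an explicit triple $(\omega,\mathbf{d}_a,\mathbf{d}_b)$ within the hypotheses of Propositions~\ref{prop1}--\ref{prop2} producing a nonzero spurious limit. An alternative is a Riemann--Lebesgue-type argument that integrates $\mathbf{y}_\varepsilon$ against a compactly supported test function on $]a,b[$: rapidly oscillating spurious contributions are averaged out, isolating the physical amplitudes, which alone cannot reproduce arbitrary Dirichlet data when $4N-2\geq 6$ other modes carry independent degrees of freedom.
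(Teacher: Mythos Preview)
You have misread the scope of the lemma. Section~6 opens by restricting to the case $N=1$ (``the case of multidimensional harmonic oscillator with $N=1$''), so the conclusion $\Box_\varepsilon=\Box_\varepsilon^{[r,s]}$ amounts only to the zero-sum condition $c_{-1}+c_0+c_1=0$. Your entire second half, aimed at forcing $c_i=0$ for $|i|\ge 2$, is therefore unnecessary, and the gap you yourself flag in that part (non-degeneracy of the spurious projection of the shooting map) is a difficulty you created by over-interpreting the statement.

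For the zero-sum part, your route and the paper's are genuinely different. The paper's proof is a two-line affine comparison: for a stationary $\mathcal{L}$ and its truncation $\mathcal{L}_0$ with $J_2=J_3=0$, the respective solutions satisfy $\mathbf{y}_\varepsilon-\mathbf{y}_{\varepsilon,0}=-Q^{-1}(J_2\,\Box_{-\varepsilon}(1)+J_3)$ while $\mathbf{z}-\mathbf{z}_0=-Q^{-1}J_3$; convergence of both pairs forces $Q^{-1}J_2\,\Box_{-\varepsilon}(1)\to 0$ for every $J_2$, hence $\Box_{-\varepsilon}(1)=0$. You instead set $J_2=J_3=0$ at the outset and argue via the dispersion relation $g(\theta)g(-\theta)=\omega^2\varepsilon^2$, claiming that a ``physical'' root $\theta/\varepsilon\to\omega$ is indispensable and forces $G_0=0$. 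This is heuristically right but not airtight: you have not justified rigorously that convergence of $\mathbf{y}_\varepsilon$ to $\mathbf{z}$ \emph{requires} such a root (rather than, say, cancellation among several modes), and you rely on the scaling $c_i=\gamma_i/\varepsilon$ without deriving it from the hypotheses. By discarding $J_2$ you threw away exactly the degree of freedom that makes the paper's argument immediate.
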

\begin{proof}
Indeed, $\mathbf{y}_\varepsilon(t)$ and $\mathbf{z}(t)$ exist, are unique and pseudo-periodic on $[a,b]$. Let $\mathcal{L}_0$ be the lagrangian deduced from $\mathcal{L}$ by removing the terms $J_2$ and $J_3$ by $0$. Since the solutions $\mathbf{y}_\varepsilon$ and $\mathbf{y}_{\varepsilon,0}$ of D.E.L. associated to $\mathcal{L}$ and $\mathcal{L}_0$ tend to the solutions $\mathbf{z}$ and $\mathbf{z}_0$ of the respective C.E.L. we have $\mathbf{y}_\varepsilon-\mathbf{y}_{\varepsilon,0}=-Q^{-1}(\Box_{-\varepsilon}J_2+J_3)$ which tends to $\mathbf{z}-\mathbf{z}_{0}=-Q^{-1}J_3$. So, we obtain $\lim\limits_{\varepsilon\to 0}Q^{-1}J_2\times\Box_{-\varepsilon}1=0$ for all $J_2$. As a consequence, $\displaystyle \Box_{-\varepsilon}1=c_0+c_1+c_{-1}=0$. Thus, $\Box_\varepsilon=\Box_\varepsilon^{[r,s]}$ where $r=c_1$ and $s=-c_{-1}$.\end{proof}

\begin{lemma}
Let $\varphi(\varepsilon,\zeta)$ be a convergent Taylor series in some polydisc of $\mathbb{C}^2$ such that $\varphi(0,0)=0$. If $\delta>0$ and $B(0,\delta)\subset\mathbb{C}$ is the disc of radius $\delta$, let $g:B(0,\delta)\rightarrow\mathbb{C}$ be a continuous function such that $g(\varepsilon)\underset{0}{\sim}\frac{\tau}{\varepsilon}$ for some $\tau\in\mathbb{C}^\star$ and $|g(\varepsilon)-\frac{\tau}{\varepsilon}|$ is bounded. Then, for all entire function $\psi(\zeta)$ in $\mathbb{C}$, we have
\begin{equation}
\lim\limits_{\varepsilon\to 0}\psi(g(\varepsilon)\varphi(\varepsilon,\varepsilon\Omega))=\psi\left(\tau \frac{\partial\varphi}{\partial\varepsilon}(0,0)I_d+\tau \frac{\partial\varphi}{\partial \zeta}(0,0)\Omega\right).
\label{decadix}
\end{equation}
uniformly in any compact subset of $\mathbb{C}^{d\times d}$.
\label{lemma2}
\end{lemma}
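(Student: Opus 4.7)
The plan is to Taylor-expand $\varphi$ around the origin, read off the linear part of $\varphi(\varepsilon,\varepsilon\Omega)$, and show that the higher-order remainder is killed by the asymptotic size of $g(\varepsilon)$. Since $\varphi(0,0)=0$, we write
$$\varphi(\varepsilon,\zeta)=a\varepsilon+b\zeta+R(\varepsilon,\zeta),\qquad a=\frac{\partial\varphi}{\partial\varepsilon}(0,0),\quad b=\frac{\partial\varphi}{\partial\zeta}(0,0),$$
where $R(\varepsilon,\zeta)=\sum_{j+k\ge 2}a_{jk}\varepsilon^j\zeta^k$. Substituting $\zeta=\varepsilon\Omega$ yields
$$\varphi(\varepsilon,\varepsilon\Omega)=\varepsilon(aI_d+b\Omega)+R(\varepsilon,\varepsilon\Omega),\qquad R(\varepsilon,\varepsilon\Omega)=\sum_{j+k\ge 2}a_{jk}\,\varepsilon^{j+k}\Omega^{k}.$$
Fix a compact $K\subset\mathbb{C}^{d\times d}$ and let $M=\sup_{\Omega\in K}\|\Omega\|$ in some submultiplicative norm. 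Since the double series for $\varphi$ converges absolutely in a polydisc of some radius $\rho>0$, for $\varepsilon$ small enough that $\varepsilon,\varepsilon M<\rho$ the tail is majorized term-by-term by a convergent numerical series, and one obtains a uniform bound $\|R(\varepsilon,\varepsilon\Omega)\|\le C\varepsilon^2$ valid for all $\Omega\in K$.

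Now multiply by $g(\varepsilon)$. The hypothesis $g(\varepsilon)\underset{0}{\sim}\tau/\varepsilon$ gives $\varepsilon g(\varepsilon)\to\tau$, while the boundedness of $g(\varepsilon)-\tau/\varepsilon$ forces $|g(\varepsilon)|=O(1/\varepsilon)$. Combined with the previous bound, this yields $\|g(\varepsilon)R(\varepsilon,\varepsilon\Omega)\|=O(\varepsilon)$ uniformly on $K$, and therefore
$$g(\varepsilon)\varphi(\varepsilon,\varepsilon\Omega)\ \longrightarrow\ \tau aI_d+\tau b\Omega=\tau\frac{\partial\varphi}{\partial\varepsilon}(0,0)\,I_d+\tau\frac{\partial\varphi}{\partial\zeta}(0,0)\,\Omega$$
as $\varepsilon\to 0$, uniformly for $\Omega\in K$.

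To promote this to convergence of $\psi$-values, I use that $\psi$, being entire, is uniformly continuous on every bounded subset of $\mathbb{C}^{d\times d}$ (expand $\psi$ as a power series and use uniform convergence on compact sets of the induced matrix-valued series). The continuous image $\{\tau aI_d+\tau b\Omega:\Omega\in K\}$ is compact, and for $\varepsilon$ small enough the approximants $g(\varepsilon)\varphi(\varepsilon,\varepsilon\Omega)$ all lie in a fixed bounded neighbourhood of that image. Uniform continuity of $\psi$ on that neighbourhood converts the uniform convergence of arguments into the uniform convergence of images, giving (\ref{decadix}).

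The only genuinely non-routine step is the uniform estimate $\|R(\varepsilon,\varepsilon\Omega)\|\le C\varepsilon^2$ over the matrix compact $K$; once this geometric majorization in the polydisc of convergence is in hand, the remainder of the argument is a combination of the triangle inequality, the two-sided control of $g(\varepsilon)$ provided by the hypotheses, and the continuity of $\psi$.
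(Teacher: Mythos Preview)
Your proof is correct and follows essentially the same route as the paper's: expand $\varphi$ as a Taylor series, isolate the linear part $\varepsilon(aI_d+b\Omega)$, bound the quadratic-and-higher remainder by $C\varepsilon^2$ uniformly over the matrix compact via absolute convergence in the polydisc, and then use the size of $g(\varepsilon)$ to conclude for $\psi(\zeta)=\zeta$ before invoking continuity of the entire $\psi$ to pass to the general case. The only cosmetic difference is that the paper splits $g(\varepsilon)\Theta-\tau(aI_d+b\Omega)$ as $(g(\varepsilon)-\tau/\varepsilon)\Theta+(\tau/\varepsilon)\cdot(\text{remainder})$ and bounds each piece separately, whereas you write $g(\varepsilon)\varepsilon(aI_d+b\Omega)+g(\varepsilon)R$ and use $\varepsilon g(\varepsilon)\to\tau$ together with $|g(\varepsilon)|=O(1/\varepsilon)$; both decompositions amount to the same estimate.
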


\noindent\begin{proof}
Let $\|.\|$ be a norm of algebra over the Banach algebra $\mathbb{C}^{d\times d}$. We denote by $\varphi(\varepsilon,\zeta)=\sum_{i,j}a_{i,j}\varepsilon^i \zeta^j$ the Taylor series of $\varphi(\varepsilon,\zeta)$ in the bidisc  $|\varepsilon|+|\zeta|<r_0$ of $\mathbb{C}^2$. Let $B'(0,r_1)\subset\mathbb{C}^{d\times d}$ be an open ball of radius $r_1>0$. We set $\delta_1=\frac{1}{2}\min\left(\delta,\frac{r_0}{1+r_1}\right)$. The matrix-valued mapping  $(\varepsilon,\Omega)\mapsto\Theta:= \varphi(\varepsilon,\varepsilon\Omega)$ is well-defined and analytic in $B(0,\delta_1)\times B'(0,r_1)$. We have by assumption $a_{0,0}=0$ and we  denote $a_{1,0}=\frac{\partial\varphi}{\partial\varepsilon}(0,0)$ and $a_{0,1}=\frac{\partial\varphi}{\partial \zeta}(0,0)$. We get 
\begin{equation}
\|g(\varepsilon)\Theta-(\tau a_{1,0}I_d+\tau a_{0,1}\Omega)\|\leq |g(\varepsilon)-\frac{\tau}{\varepsilon}|\|\Theta\|+|\tau|\sum_{i+j\geq 2}|a_{i,j}||\varepsilon|^{i+j-1}\|\Omega\|^j.
\label{polydisc}
\end{equation}
Let $b_1>0$ such that $|g(\varepsilon)-\frac{\tau}{\varepsilon}|\leq b_1$ in $[-\delta,\delta]$. Let $\mathcal{K}$ be some compact subset of $B(0,\delta_1)\times B'(0,r_1)$. We use the fact that, in any bidisc $|\varepsilon|+|\zeta|<r_2<r_0$, the Taylor series $\sum_{i,j}|a_{ij}|\varepsilon^i \zeta^j$ is normally convergent. The function $\Theta/\varepsilon=\varphi(\varepsilon,\varepsilon\Omega)/\varepsilon$ being a Taylor series, we have $\|\Theta\|\leq b_2|\varepsilon|$ in $\mathcal{K}$ for some $b_2>0$. Similarly, the sum in (\ref{polydisc}) is upper bounded by $b_3|\varepsilon|$ in $\mathcal{K}$ for some $b_3>0$. Then, the l.h.s. of (\ref{polydisc}) is upper bounded by $\varepsilon(b_1b_2+|\tau|b_3)$ in $\mathcal{K}$. This implies that the convergence in (\ref{decadix}) holds and is uniform in the ball $B(0,r_1)$ provided we have $\psi(\zeta)=\zeta$. Now, if $\psi(\zeta)$ is an entire function, the l.h.s. (\ref{decadix}) gets sense and it is classical analysis that composition limit law holds for uniform convergence.
\end{proof}

\begin{theorem}
We assume that $M=(b-a)/\varepsilon\in\mathbb{N}^\star$ is even. Let us consider $\Box_\varepsilon=\Box_\varepsilon^{[r,r]}$, for some $r\in\mathbb{R}^\star$. Let $\mathcal{L}$ be a real  stationary quadratic lagrangian of the shape $\mathcal{L}(\mathbf{x},\dot{\mathbf{x}})=\frac 12P\dot{\mathbf{x}}^2+\frac12Q\mathbf{x}^2$, with $P$ and $Q$ real, constant and nonsingular. We suppose that $-P^{-1}Q=\Omega^2$ and $Sp(\Omega)\subset\mathbb{U}$ for some matrix $\Omega\in\mathbb{R}^{d\times d}$ diagonalizable over $\mathbb{R}$. We require also the non-resonance property
\begin{equation}
\frac{1}{\pi}(b-a)\omega\notin \mathbb{Q}~\mbox{  and  }~\frac 1\pi \arccos \left(1-\frac{(b-a)^2\omega^2}{2r^2n^2}\right) \notin \mathbb{Q},\forall n\in\mathbb{N}^\star.
\label{resonance}
\end{equation}
for all eigenvalue $\exp(i\omega)$ in $Sp(\Omega)\cap\mathbb{U}$. Then $\mathbf{y}_\varepsilon(t)$ is well-defined. Moreover, $r=\pm\frac{1}{2}$ if and only if for all $\mathbf{d}_a,\mathbf{d}_b\in\mathbb{C}^d$, $\mathbf{y}_\varepsilon (t)$ tends to $\mathbf{z}(t)$ uniformly on $[a,b]$ as $\varepsilon\rightarrow 0$.
\label{maintheorem}
\end{theorem}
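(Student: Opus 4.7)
The plan is to realise $\mathbf{y}_\varepsilon(t)$ as an explicit matrix-valued trigonometric combination, pass to the limit with Lemma \ref{lemma2}, and compare with the formula of Proposition \ref{prop1} for $\mathbf{z}(t)$. Since $J_1=J_2=J_3=0$, $P$ and $Q$ are constant, and the formal expansion gives $\Box_{-\varepsilon}^{[r,r]}=-\Box_\varepsilon^{[r,r]}$ in the interior (swap $i\mapsto-i$ in the defining sum while keeping $c_i=\pm r/\varepsilon$), D.E.L.\ (\ref{mchomat}) reduces to $P\Box_\varepsilon^2\mathbf{x}=Q\mathbf{x}$, that is to the pointwise step-$2$ equation
\[
\mathbf{x}(t+2\varepsilon)+\mathbf{x}(t-2\varepsilon)=\bigl[2I_d-(\varepsilon/r)^2\Omega^2\bigr]\mathbf{x}(t),\qquad t\in[a+2\varepsilon,b-2\varepsilon].
\]
Using functional calculus on the real diagonalisable matrix $\Omega$, I set $\Theta_\varepsilon=\arccos(I_d-\varepsilon^2\Omega^2/(2r^2))$ (well defined once $\varepsilon$ is small enough that the spectrum of the argument lies in $(-1,1)$) and $\alpha_\varepsilon=\Theta_\varepsilon/(2\varepsilon)$. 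By construction $2\cos(2\alpha_\varepsilon\varepsilon)=2I_d-(\varepsilon/r)^2\Omega^2$, so the candidate
\[
\mathbf{y}_\varepsilon(t)=\cos(\alpha_\varepsilon(t-a))\,\mathbf{d}_a+\sin(\alpha_\varepsilon(t-a))\,\mathbf{w}_\varepsilon
\]
satisfies the step-$2$ identity for every $t\in[a,b]$, and the $\chi_{\pm1}$ corrections appearing in (\ref{mchomat}) on the strips $[a,a+2\varepsilon]\cup[b-2\varepsilon,b]$ are handled by a direct computation using the same formula. The Dirichlet condition $\mathbf{y}_\varepsilon(b)=\mathbf{d}_b$ fixes $\mathbf{w}_\varepsilon$ uniquely provided $\sin(\alpha_\varepsilon(b-a))$ is invertible, which with $\varepsilon=(b-a)/M$ follows from the second part of (\ref{resonance}) applied at $n=M$; a Vandermonde argument in the spirit of Proposition \ref{prop3} then shows that this $\mathbf{y}_\varepsilon$ is the unique pseudo-periodic solution of D.E.L.

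For the limit, the classical expansion $\arccos(1-u)=\sqrt{2u}\,G(u)$ (with $G$ entire and $G(0)=1$) suggests taking the holomorphic branch $\varphi(\varepsilon,\zeta)=(\zeta/r)\,G(\zeta^2/(2r^2))$, which is a convergent Taylor series about $(0,0)$ with $\varphi(0,0)=0$, $\partial_\varepsilon\varphi(0,0)=0$, $\partial_\zeta\varphi(0,0)=1/r$ and $\Theta_\varepsilon=\varphi(\varepsilon,\varepsilon\Omega)$. Applying Lemma \ref{lemma2} with $g(\varepsilon)=1/(2\varepsilon)$, $\tau=1/2$ and $\psi=\mathrm{id}$ yields $\alpha_\varepsilon\to\Omega/(2r)$; by continuity of matrix $\cos$ and $\sin$ on bounded sets of matrices, one then gets
\[
\cos(\alpha_\varepsilon(t-a))\longrightarrow\cos\bigl((t-a)\Omega/(2r)\bigr),\qquad\sin(\alpha_\varepsilon(t-a))\longrightarrow\sin\bigl((t-a)\Omega/(2r)\bigr)
\]
uniformly in $t\in[a,b]$. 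Proposition \ref{prop1} supplies $\mathbf{z}(t)=\cos((t-a)\Omega)\mathbf{d}_a+\sin((t-a)\Omega)\mathbf{z}_2$, the inverse $\sin((b-a)\Omega)^{-1}$ being legitimate by the first part of (\ref{resonance}). For $r=\pm1/2$ we have $\Omega/(2r)=\pm\Omega$; the parity of $\cos$ and $\sin$ absorbs the sign in the coefficient $\mathbf{w}_\varepsilon$ (obtained through continuity of matrix inversion), so the limits of $\cos(\alpha_\varepsilon(\cdot))$, $\sin(\alpha_\varepsilon(\cdot))$ and $\mathbf{w}_\varepsilon$ all coincide with their continuous counterparts, giving uniform convergence $\mathbf{y}_\varepsilon\to\mathbf{z}$ on $[a,b]$.

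For the converse, if $r\neq\pm1/2$ I choose $\mathbf{d}_a$ an eigenvector of $\Omega$ with eigenvalue $\omega>0$ and $\mathbf{d}_b=0$: both $\mathbf{y}_\varepsilon$ and $\mathbf{z}$ reduce to a scalar problem on the line through $\mathbf{d}_a$, with $\mathbf{y}_\varepsilon(t)\to\sin((b-t)\omega/(2r))/\sin((b-a)\omega/(2r))\cdot\mathbf{d}_a$ while $\mathbf{z}(t)=\sin((b-t)\omega)/\sin((b-a)\omega)\cdot\mathbf{d}_a$; these are two sinusoids in $t$ with distinct positive frequencies, hence cannot agree on $[a,b]$ and uniform convergence fails. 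The main obstacle in the plan is the construction step: Proposition \ref{prop3} does not apply verbatim because, as in Example 5.2, one has $|Sp(A)|=4<8=4Nd$ when $\Box_\varepsilon=\Box_\varepsilon^{[r,r]}$ (the even and odd sub-grids decouple and the odd one is a priori free), so well-definedness and uniqueness of the pseudo-periodic $\mathbf{y}_\varepsilon$ must be established directly from the functional-calculus identity $2\cos(2\alpha_\varepsilon\varepsilon)=2I_d-(\varepsilon/r)^2\Omega^2$ together with a careful treatment of the $\chi_{\pm 1}$ boundary strips.
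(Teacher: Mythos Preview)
Your overall architecture is the paper's: write D.E.L.\ as the step-$2$ recurrence, introduce $\Theta_\varepsilon$ with $\cos\Theta_\varepsilon=I_d-\varepsilon^2\Omega^2/(2r^2)$, apply Lemma~\ref{lemma2} with $\varphi$ built from $\arccos$, and identify the limit as $\mathbf{Z}\bigl((t-a)/(2r),(b-a)/(2r),\mathbf{d}_a,\mathbf{d}_b\bigr)$, which equals $\mathbf{z}(t)$ exactly when $r=\pm\tfrac12$. Your converse via an eigenvector of $\Omega$ and $\mathbf{d}_b=0$ is a clean scalar reduction and is equivalent to the paper's argument on the matrix (\ref{prodcroix}).

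The genuine gap is precisely the step you wave through: ``the $\chi_{\pm1}$ corrections \ldots\ are handled by a direct computation using the same formula''. They are not. Writing (\ref{mchomat3}) at $t=a$ with $c_0=0$, $c_{\pm1}=\pm r/\varepsilon$, $J_k=0$ gives
\[
\mathbf{x}_\varepsilon(a+2\varepsilon)=\Bigl(I_d-\frac{\varepsilon^2}{r^2}\Omega^2\Bigr)\mathbf{d}_a=(2\cos\Theta_\varepsilon-I_d)\,\mathbf{d}_a,
\]
and symmetrically at $t=b$. Your ansatz $\mathbf{y}_\varepsilon(t)=\cos(\alpha_\varepsilon(t-a))\mathbf{d}_a+\sin(\alpha_\varepsilon(t-a))\mathbf{w}_\varepsilon$ gives instead $\mathbf{y}_\varepsilon(a+2\varepsilon)=\cos\Theta_\varepsilon\,\mathbf{d}_a+\sin\Theta_\varepsilon\,\mathbf{w}_\varepsilon$, and since $\mathbf{w}_\varepsilon$ is already fixed by the Dirichlet condition at $b$, these two values do not agree for generic $\mathbf{d}_a,\mathbf{d}_b$. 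Hence your function is \emph{not} a solution of D.E.L.\ on $[a,b]$ and is not the $\mathbf{y}_\varepsilon$ of the statement: it is the object the paper calls the $\tilde{\mathbf{g}}$-extension, obtained by imposing the naive boundary values $\mathbf{d}_a,\mathbf{d}_b$ directly on the interior recurrence.

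The paper closes this gap by first deriving the perturbed endpoint data $\mathbf{d}_a'=(2\cos\Theta_\varepsilon-I_d)\mathbf{d}_a$, $\mathbf{d}_b'=(2\cos\Theta_\varepsilon-I_d)\mathbf{d}_b$ from the boundary-strip equations, solving the interior recurrence with those to obtain coefficients $\mathbf{g}_1,\mathbf{g}_2$, and then comparing with the naive coefficients $\tilde{\mathbf{g}}_1,\tilde{\mathbf{g}}_2$ (your $\mathbf{w}_\varepsilon$ corresponds to these). The difference $\mathbf{g}_j-\tilde{\mathbf{g}}_j$ is $O(\varepsilon)$ but is multiplied by the bounded unitary factors $\exp(\pm in\Theta_\varepsilon)$, and this is exactly the content of the terms $\mathbf{q}_1,\mathbf{q}_2$ in the decomposition $\mathbf{z}'(t)-\mathbf{y}_\varepsilon(t)=\mathbf{q}_1+\cdots+\mathbf{q}_4$. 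Your limiting computation recovers $\mathbf{q}_3+\mathbf{q}_4\to0$; what is missing is the (easy but not automatic) verification that the boundary perturbation contributes $\mathbf{q}_1+\mathbf{q}_2\to0$. Once you insert the correct $\mathbf{d}_a',\mathbf{d}_b'$ and carry that extra estimate, your argument becomes the paper's.
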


\noindent\begin{proof}
The necessary conditions of the first order C.E.L. and D.E.L., that is (\ref{elho}) and (\ref{mchomat}), simplify into 
\begin{equation}
-P\ddot {\mathbf{z}}(t)+Q\mathbf{z}(t)=0,\hspace{1cm}P\Box _{-\varepsilon
}\Box _\varepsilon \mathbf{y}_\varepsilon (t)+Q\mathbf{y}_\varepsilon (t)=0,
\label{oscillunidim}
\end{equation}
completed with Dirichlet conditions. Let $\mathbf{z}(t)$ be the solution of (\ref{oscillunidim}), that is to say
\begin{center}
$\mathbf{z}(t)=\exp(i\Omega(t-a))\mathbf{f}_1+\exp(-i\Omega (t-a))\mathbf{f}_2$,
\end{center}
where $t\in [a,b]$ and $\displaystyle\Omega^2=-P^{-1}Q$. Due to (\ref{resonance}), the diagonalizable matrix $\sin((b-a)\Omega)$ is invertible and we may solve the boundary conditions $\mathbf{z}(a)=\mathbf{d}_a$ and $\mathbf{z}(b)=\mathbf{d}_b$. If we set 
\begin{equation}
\mathbf{F}(\tau,\mathbf{d}_a,\mathbf{d}_b)=\frac{i}{2}(\sin (\tau\Omega)^{-1}(\exp(-i\tau\Omega)\mathbf{d}_a-\mathbf{d}_b)
\label{Fgras}
\end{equation}
we get $\mathbf{f}_1=\mathbf{F}(b-a,\mathbf{d}_a,\mathbf{d}_b)$ and $\displaystyle \mathbf{f}_2=\mathbf{F}(a-b,\mathbf{d}_a,\mathbf{d}_b)$.

Since we have $c_1=r/\varepsilon$, $c_{-1}=-r/\varepsilon$ and $c_0=0$, then, for all $t\in[a+2\varepsilon,b-2\varepsilon]$, D.E.L. in  (\ref{oscillunidim}) may be simplified into 
\begin{equation}
\mathbf{x}(t+2\varepsilon)+\mathbf{x}(t-2\varepsilon)=2\left(I_d+\frac{\varepsilon^2}{2r^2}P^{-1}Q\right)\mathbf{x}(t).
\label{mchomatoh}
\end{equation}
Since $\Omega$ is diagonalizable over $\mathbb{R}$, for some $B\in\mathbb{R}^{d\times d}$, we have $B^{-1}\Omega B=\mbox{diag}(\omega_i)$. For $\varepsilon>0$ such that $\varepsilon<2|r|\min|\omega_i|$, the matrix
\begin{equation} \Theta=B\mbox{diag}\left(\arcsin\left(\frac{\varepsilon}{r}\omega_i\sqrt{1-\frac{\varepsilon^2}{4r^2}\omega_i^2}\right)\right)B^{-1}\in\mathbb{R}^{d\times d}
\label{theta}
\end{equation}
is well-defined and the computation of $\cos(B^{-1}\Theta B)$ gives $\cos(\Theta)=Id-\frac{\varepsilon^2}{2r^2}\Omega^2$. By setting $\mathbf{u}_{n}=\mathbf{x}_\varepsilon(a+2n\varepsilon)$, the equation (\ref{mchomatoh}) gives $\mathbf{u}_{n+1}=2\cos(\Theta)\mathbf{u}_n-\mathbf{u}_{n-1}$. The solution of this recurrence is given by
\begin{center}
$\mathbf{x}_\varepsilon (a+2n\varepsilon)=\exp(in\Theta)\mathbf{g}_1+\exp(-in\Theta)\mathbf{g}_2$
\end{center}
for all $n$ from 1 to $\frac{1}{2}M-1$.\\
To compute the vectors $\mathbf{g}_1$ and $\mathbf{g}_2$ we use the Dirichlet conditions for $\mathbf{y}_\varepsilon$ and the invertibility of the matrices $\sin(k\Theta)$, $k\geq1$. Indeed, it is ensured since we have 
\begin{center}
$\displaystyle \det(\sin(k\Theta))=\prod_{i=1}^d\sin\left(k\arccos\left(1-\frac{(b-a)^2\omega^2}{2r^2M^2}\right)\right)\neq 0$ for all $k,M\in\mathbb{N}^\star$
\end{center} 
help to (\ref{resonance}). Now, the equation (\ref{mchomat3}) gives for $t=a$ and $t=b$ the boundary conditions 
\begin{center}
$\displaystyle\mathbf{x}_\varepsilon (a+2\varepsilon)=\mathbf{d}_a^{\prime}=\left(I_d-\frac{\varepsilon^2}{r^2}\Omega^2\right)\mathbf{d}_a$ and $\displaystyle \mathbf{x}_\varepsilon
(b-2\varepsilon)=\mathbf{d}_b^{\prime}=\left(I_d-\frac{\varepsilon^2}{r^2}\Omega^2\right)\mathbf{d}_b$.
\end{center}
Note that $\mathbf{d}_a^{\prime}$ and $\mathbf{d}_b^{\prime}$ tend to $\mathbf{d}_a$ and $\mathbf{d}_b$
respectively as $\varepsilon$ tends to 0. Next, (\ref{mchomat3}) gives for $t=a+\varepsilon$ and $t=b-\varepsilon$ the vectors
\begin{center}
$\begin{array}{l}
\mathbf{g}_1=\frac{i}{2}\left(\sin\left(\left(\frac{1}{2}M-2\right)\Theta\right)\right)^{-1}\left(\exp\left(-i\left(\frac{1}{2}M-1\right)\Theta\right)\mathbf{d}_a^{\prime}-\exp(-i\Theta)\mathbf{d}_b^{\prime}\right)\vspace{0.1cm}\\
\mathbf{g}_2=\frac{i}{2}\left(\sin\left(\left(\frac{1}{2}M-2\right)\Theta\right)\right)^{-1}\left(\exp(i\Theta)\mathbf{d}_b^{\prime}-\exp\left(i\left(\frac{1}{2}M-1\right)\Theta\right)\mathbf{d}_a^{\prime}\right).
\end{array}$\vspace{0.1cm}
\end{center}
We also need in the following
\begin{center}
$\begin{array}{l} \tilde{\mathbf{g}}_1=\frac{i}{2}\left(\sin\left(\frac{1}{2}M\Theta\right)\right)^{-1}\left(\exp\left(-i\frac{1}{2}M\Theta\right)\mathbf{d}_a-\mathbf{d}_b\right)\vspace{0.1cm}\\ \tilde{\mathbf{g}}_2=\frac{i}{2}\left(\sin\left(\frac{1}{2}M\Theta\right)\right)^{-1}\left(\mathbf{d}_b-\exp\left(i\frac{1}{2}M\Theta\right)\mathbf{d}_a\right).
\end{array}$
\end{center}
\vspace{0.1cm}
Since the sequence $\mathbf{u}_n$ is well-determined, Proposition \ref{prop3} and its proof show how $\mathbf{x}_\varepsilon(t)$ may be extended to an unique continuous pseudo-periodic function $\mathbf{y}_\varepsilon (t)$ over $[a,b]$.\\
We will apply several times Lemma \ref{lemma2} with $\varphi(\varepsilon,\zeta)=\arcsin\left(\frac{1}{r}\zeta\sqrt{1-\frac{1}{4r^2}\zeta^2}\right)$, see (\ref{theta}). Keeping notation of the proof of Lemma \ref{lemma2}, we have  $a_{1,0}=0$ and $a_{0,1}=\frac{1}{r}$. If  $g(\varepsilon)=\frac{b-a}{2\varepsilon}$, we get $\psi(g(\varepsilon)\Theta)\underset{\varepsilon\rightarrow 0}{\rightarrow} \psi(\frac{b-a}{2r}\Omega)$. Applying this result to $\psi(\zeta)=\cos(\zeta)$ or $\sin(\zeta)$ or $\exp(\pm i\zeta)$ we readily obtain $\displaystyle\lim\limits_{\varepsilon\to 0}\mathbf{g}_j=\lim\limits_{\varepsilon\to 0}\tilde{\mathbf{g}}_j=\mathbf{f}_j'$, $j=1,2$, where
\begin{equation}
\mathbf{f}_1'=\mathbf{F}\left(\frac{b-a}{2r},\mathbf{d}_a,\mathbf{d}_b\right)\mbox{ and }\mathbf{f}_2'=\mathbf{F}\left(\frac{a-b}{2r},\mathbf{d}_a,\mathbf{d}_b\right).
\label{fprime}
\end{equation}

Now, all these preliminaries being done, let us discuss the convergence of $\mathbf{y}_\varepsilon$ to $\mathbf{z}$. We fix $\delta>0$. If $t\in[a+\delta,b-\delta]$, we choose $g(\varepsilon)=\left[\frac{t-a}{2\varepsilon}\right]$ to represent the integer $n$ such that $t=a+2n\varepsilon$. Hence, with $\tau=t-a$, $g(\varepsilon)\sim \frac{\tau}{2\varepsilon}$ and $|g(\varepsilon)-\frac{\tau}{2\varepsilon}|\leq 1$. If we define $\mathbf{Z}$ by
\begin{equation}
\mathbf{Z}(\tau_1,\tau_2,\mathbf{d}_a,\mathbf{d}_b)=(\sin(\tau_2\Omega))^{-1}\sin(\tau_1\Omega)\mathbf{d}_b-(\sin(\tau_2\Omega))^{-1}\sin((\tau_1-\tau_2)\Omega)\mathbf{d}_a
\label{Zgras}
\end{equation}
for suitable numbers $\tau_1$ and $\tau_2$, then we have $\displaystyle \mathbf{z}(t)=\mathbf{Z}(t-a,b-a,\mathbf{d}_a,\mathbf{d}_b)$ and we are going to prove that $\mathbf{y}_\varepsilon(t)$ tends uniformly locally on $]a,b[$ to 
\begin{equation}
\mathbf{z}'(t):=\mathbf{Z}\left(\frac{t-a}{2r},\frac{b-a}{2r},\mathbf{d}_a,\mathbf{d}_b\right).
\label{zlimite}
\end{equation}
Let us note that $\mathbf{z}'(t)$ does not stand for the derivative that we have denoted $\dot{\mathbf{z}}(t)$. The vector $\mathbf{z}'(t)-\mathbf{y}_\varepsilon(t)$ may be written as
\begin{equation*}
\exp(i\tau\Omega)\mathbf{f}_1'+\exp(-i\tau\Omega)\mathbf{f}_2'-(\exp(in\Theta)\mathbf{g}_1+\exp(-in\Theta)\mathbf{g}_2).
\end{equation*}
Let us introduce the quantities
\begin{center}
$\begin{array}{rcl} 
\mathbf{q}_1 & = & \displaystyle -\exp(in\Theta)(\mathbf{g}_1-\tilde{\mathbf{g}}_1)\vspace{0.1cm}\\
\mathbf{q}_2 & = & \displaystyle \exp(-in\Theta)(\mathbf{g}_2-\tilde{\mathbf{g}}_2)\vspace{0.1cm}\\
\mathbf{q}_3 & = & \displaystyle \left(\cos\left(\frac{\tau}{2r}\Omega\right)-\cos(n\Theta)\right)\mathbf{d}_a\vspace{0.1cm}\\
\mathbf{q}_4 & = & \displaystyle i\sin\left(\frac{\tau}{2r}\Omega\right)(\mathbf{f}_1'-\mathbf{f}_2')-i\sin (n\Theta)(\tilde{\mathbf{g}}_1-\tilde{\mathbf{g}}_2).\vspace{0.1cm}
\end{array}$
\end{center} 
\vspace{0.1cm}
Straightforward computations yield $\mathbf{z}'(t)-\mathbf{y}_\varepsilon(t)=\mathbf{q}_1+\ldots+\mathbf{q}_4$, as a consequence of the two equalities 
$\mathbf{f}_1'+\mathbf{f}_2'=\tilde{\mathbf{g}}_1+\tilde{\mathbf{g}}_2=\mathbf{d}_a$.
Let $\|v\|$ be any norm on $\mathbb{C}^d$. Let us prove that $\lim\limits_{\varepsilon\to 0}\mathbf{q}_i=\mathbf{0}$ uniformly in $[a,b]$, for $i=1,\ldots,4$. As we have seen in the discussion before (\ref{fprime}), the vectors $\tilde{\mathbf{g}}_i-\mathbf{g}_i$ tend to 0 and $\exp(\pm in\Theta)$ is bounded since $\Theta$ is real so the vectors $\mathbf{q}_1$ and $\mathbf{q}_2$ tend to 0 uniformly on $[a,b]$. The case of $\mathbf{q}_3$ is obvious by using Lemma \ref{lemma2}, while $\mathbf{q}_4$ is more complicated. We obtain that $\|\mathbf{q}_4\|$ is less than 
\begin{center}
$\left\|\left(\sin\left(\frac{b-a}{2r}\Omega\right)\right)^{-1}\cos\left(\frac{b-a}{2r}\Omega\right)\sin\left(\frac{\tau}{2r}\Omega\right)%
-\left(\sin\left(\frac{1}{2}M\Theta\right)\right)^{-1}\cos\left(\frac{1}{2}M\Theta\right)\sin(n\Theta)\right\|.\|\mathbf{d}_a \|+\left\|\left(\sin\left(\frac{b-a}{2r}\Omega\right)\right)^{-1}\sin\left(\frac{\tau}{2r}\Omega\right)-\left(\sin\left(\frac{1}{2}M\Theta\right)\right)^{-1}\sin(n\Theta)\right\|.\|\mathbf{d}_b\|$
\end{center}
that is 
\begin{equation}
\displaystyle\left\|\mbox{cotg}\left(\frac{b-a}{2r}\Omega\right)-\mbox{cotg}\left(\frac{1}{2}M\Theta\right)\right\|.\|\mathbf{d}_a\|+\left\|\mbox{cosec}\left(\frac{b-a}{2r}\Omega\right)-\mbox{cosec}\left(\frac{1}{2}M\Theta\right)\right\|.\left\|\mathbf{d}_b\right\|.
\label{majo}
\end{equation}
We have $\|\Theta\|\leq\|B\|\|B^{-1}\||\varepsilon|\frac{1}{r}\max(|\omega_i|(1-\frac{\varepsilon^2}{4r^2}\omega_i^2))$ where $B$ diagonalizes $\Omega$. So, Lemma (\ref{lemma2}) with $\psi(\zeta)=\cos(\zeta)$ and next $\psi(\zeta)=\sin(\zeta)$ shows that the bound (\ref{majo}) tends to 0 for all $t\in[a,b]$. But this convergence is also uniform in $[a,b]$ due to formula (\ref{polydisc}), to the previous bound of $\Theta$ and to the boundedness of $g(\varepsilon)-\frac{\tau}{2\varepsilon}$. Hence, by using notation in (\ref{zlimite}), we have proved so far that
\begin{center}
$\mathbf{y}_\varepsilon(t)\underset{\varepsilon\rightarrow 0}{\rightarrow}\mathbf{z}'(t)$ uniformly in $[a,b]$.
\end{center}
Let us show that, if $r\neq\pm\frac{1}{2}$, then we can choose $t\in[a,b]$ and the vectors $\mathbf{d}_a,\mathbf{d}_b\in\mathbb{C}^d$ in such a way that $\mathbf{z}'(t)\neq \mathbf{z}(t)$. Indeed, inspection of (\ref{Zgras}) shows that the coefficient of $\mathbf{d}_a$ is the matrix 
\begin{equation}
\left(\sin\left(\frac{b-a}{2r}\Omega\right)\right)^{-1}\sin\left(\frac{t-b}{2r}\Omega\right)-\left(\sin((b-a)\Omega)\right)^{-1}\sin((t-b)\Omega),
\label{prodcroix}
\end{equation}
where $\sin\left(\frac{b-a}{2r}\Omega\right)$ and $\sin((b-a)\Omega)$ are invertible due to (\ref{resonance}). We choose $t$ such that for all $\omega\in Sp(\Omega)$ we have 
\begin{center}
$\left(\sin\left(\frac{b-a}{2r}\omega\right)\right)^{-1}\sin\left(\frac{t-b}{2r}\omega\right)-(\sin((b-a)\omega))^{-1}\sin((t-b)\omega)\neq 0$.
\end{center}
In this way, given any vector $\mathbf{d}_b$ we may choose $\mathbf{d}_a$ so that $\mathbf{z}'(t)-\mathbf{z}(t)=e_1$ where $e_1$ is the first vector of the canonical basis of $\mathbb{C}^{d}$ since the matrix (\ref{prodcroix}) is invertible.\\
If $r=\pm 1/2$, we have $\mathbf{z}'(t)=\mathbf{z}(t)$ and the proof is complete.\end{proof} 

\begin{remark}\rm
If $M=(b-a)/\varepsilon$ is odd, the system (\ref{solfin}) for determining $\mathbf{d}_s=\mathbf{x}_\varepsilon(a+\varepsilon)$ from $\mathbf{d}_a$ and $\mathbf{d}_b$ is not Cramer. Indeed, D.E.L. in (\ref{oscillunidim}) simplifies into (\ref{mchomatoh}). Since this recurrence does not match $a$, $a+\varepsilon$ and $b$, the matrix occuring in (\ref{cramer}) is the zero matrix $O_d$. So, when $\varepsilon$ is not of the shape alluded in the previous theorem, the convergence is not guaranteed (see the numerical experiments below).
\end{remark}

\begin{remark}\rm
The assumption $J_k=0$, $k\geq 2$, is not restrictive. Let $\mathcal{L}$ be defined as in (\ref{bilineaire}) and $\mathcal{L}_0(\mathbf{x},\dot{\mathbf{x}})=\frac 12P\dot{\mathbf{x}}^2+\frac12Q\mathbf{x}^2$. Let $\mathbf{y}_{\varepsilon}$, $\mathbf{y}_{\varepsilon,0}$, $\mathbf{z}$ and $\mathbf{z}_0$ be the solutions of D.E.L. and C.E.L. for $\mathcal{L}$ and $\mathcal{L}_0$. We shall see that the property of convergence holds for $\mathcal{L}$ iff it holds for $\mathcal{L}_0$. Indeed, we have the formulas $\mathbf{y}_\varepsilon=\mathbf{y}_{\varepsilon,0}-Q^{-1}(\Box_{-\varepsilon}J_2+J_3)$, $\mathbf{z}=\mathbf{z}_{0}-Q^{-1}J_3$ and $\lim\limits_{\varepsilon\to 0}\mathbf{y}_{\varepsilon,0}=\mathbf{z}_0$. So we have $\lim\limits_{\varepsilon\to 0}\mathbf{y}_{\varepsilon}=\mathbf{z}\Leftrightarrow \lim\limits_{\varepsilon\to 0}\mathbf{y}_{\varepsilon,0}=\mathbf{z}_0$.
\end{remark}

\begin{remark}\rm
We already obtained in \cite{RS2} two characterizations of $\Box_\varepsilon^{[r,s]}$ among all operators $\Box_\varepsilon$. The first one was linked to the convergence of the l.h.s. of (\ref{mchomat}) to the l.h.s. of (\ref{elho}) for all lagrangian (\ref{bilineaire}) and led to the relation $r+s=1$. The second one ensured that $Sp(A)\subset\mathbb{U}$ if $d=1$, which leads to $s=\overline{r}$. Now, we may add a third family which consists in operators $\Box_\varepsilon^{[r,s]}$, for which the five-terms recurrence (\ref{mchomat}) splits into two three-terms recurrences, one for $\mathbf{x}_{2n}$ and $\mathbf{x}_{2n+1}$, this being equivalent to $r=s$.
\end{remark}

\section{Numerical experiments}

Let us illustrate the phenomenon of convergence proved in Theorem \ref{maintheorem}, when $M$ increases. We set in every example below $\mathbf{d}_a=12$, $\mathbf{d}_b =-14$, $p=1$ and $q=-0.23$

Figures \ref{fig1-a.b} below illustrate the behaviour of $\mathbf{y}_\varepsilon(t)$ and $\mathbf{z}(t)$ when $M$ increases ($M=30$ and $M=120$). We choose first  $\gamma_1=1/2$, $a=0$ and $b=30$, so condition (\ref{resonance}) is true.
\begin{figure}[th]
\includegraphics[width=4.6cm,angle=270]{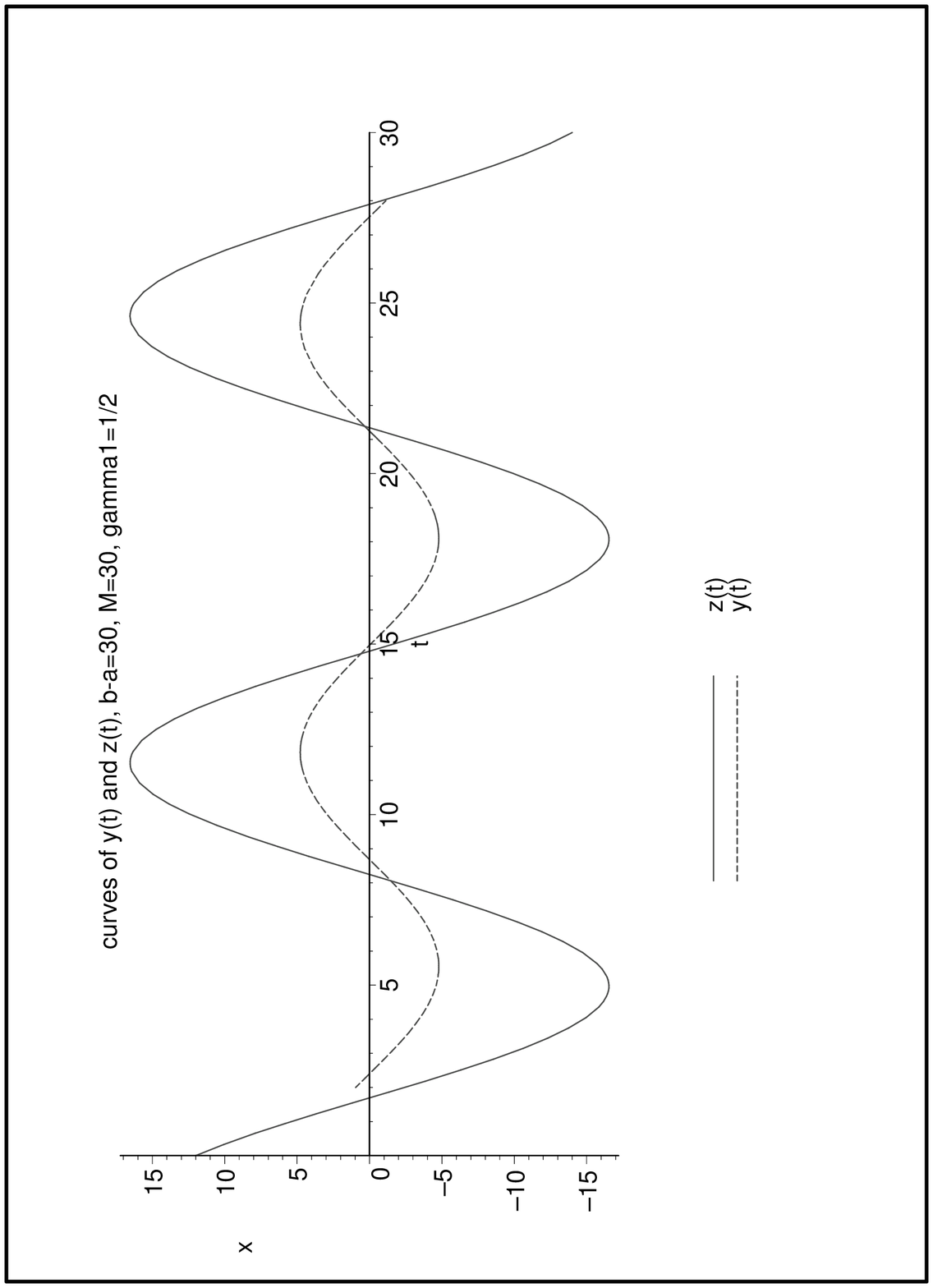}~\includegraphics[width=4.6cm,angle=270]{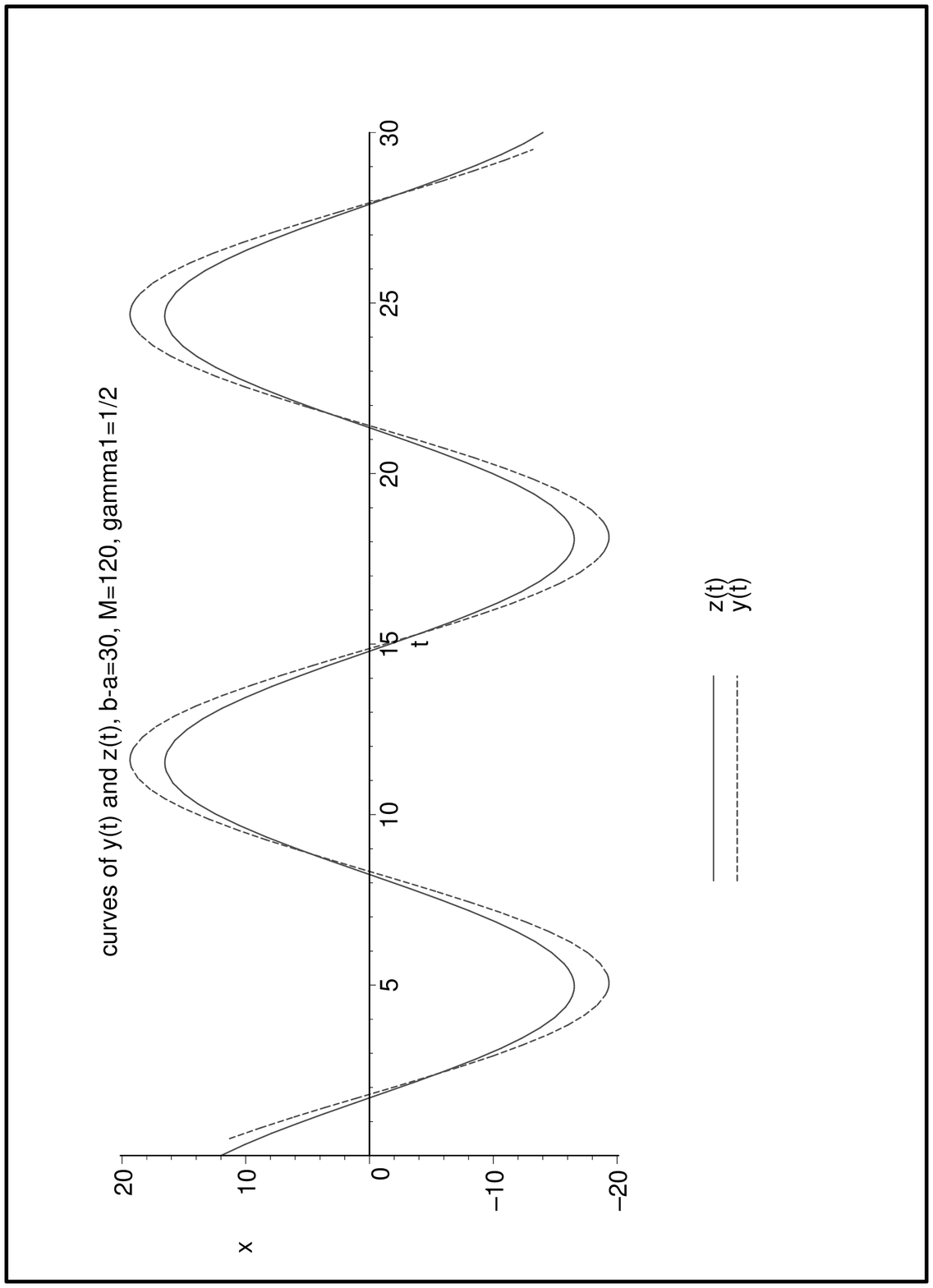}
\caption{\textit{Convergence in non-resonant case ($M=30$ and $M=120$)}}
\label{fig1-a.b}
\end{figure}

As soon as the condition (\ref{resonance}) for the continuous lagrangian fails, the convergence does not occur. However, if $b=a+\frac 1\omega (\arcsin(\rho )+2K\pi)$, for $\rho$ tending to 0, the upper bound (\ref{majo}) grows to infinity. The small denominators~ $\sin(M\Theta)$ and $\sin(\tau \omega)$ occuring in $\mathbf{y}_\varepsilon(t)$ and $\mathbf{z}(t)$ imply that the convergence holds but is slowed down (see Figures \ref{fig2-a.b}).
\begin{figure}[th]
\includegraphics[width=4.6cm,angle=270]{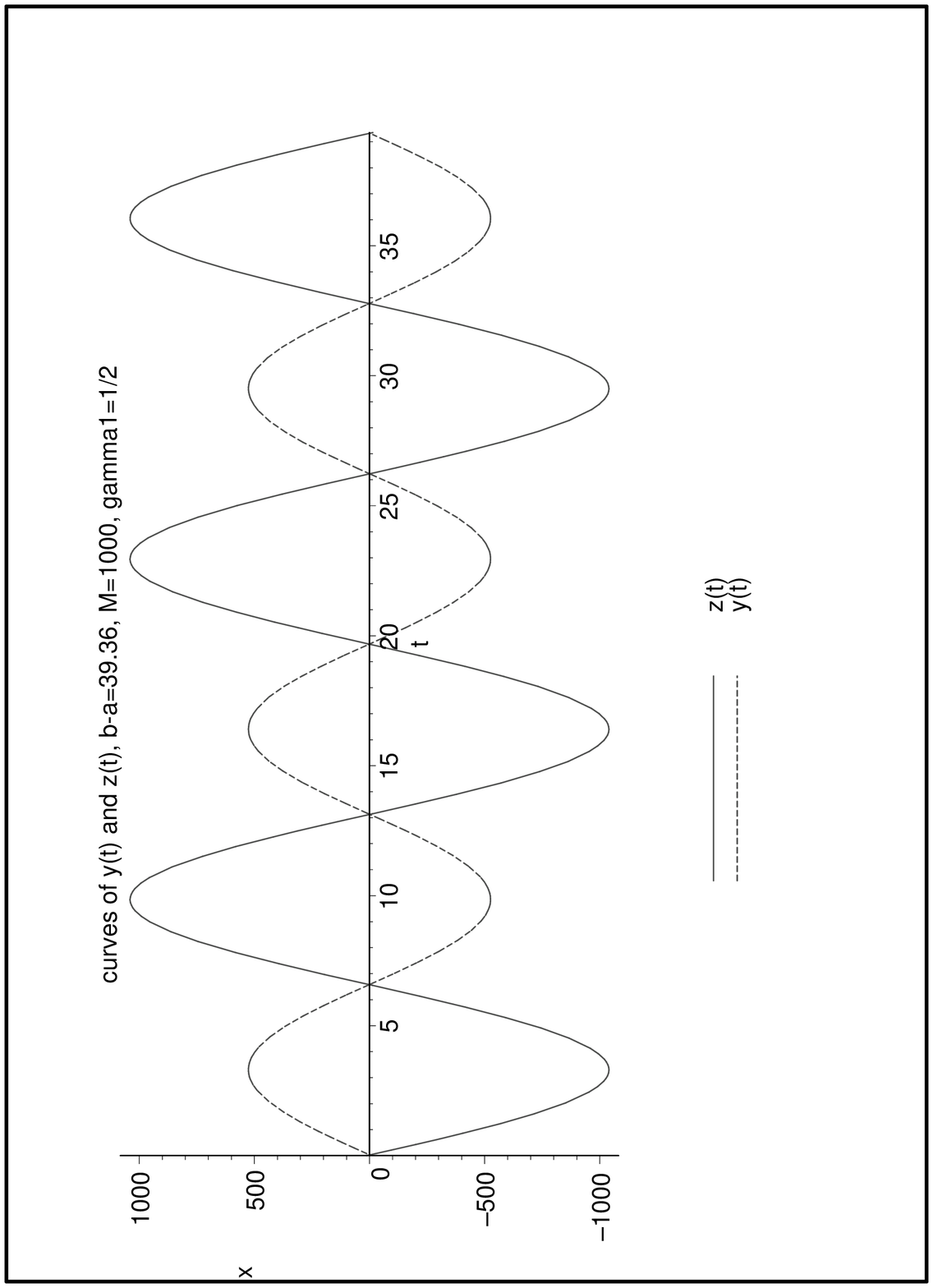}~\includegraphics[width=4.6cm,angle=270]{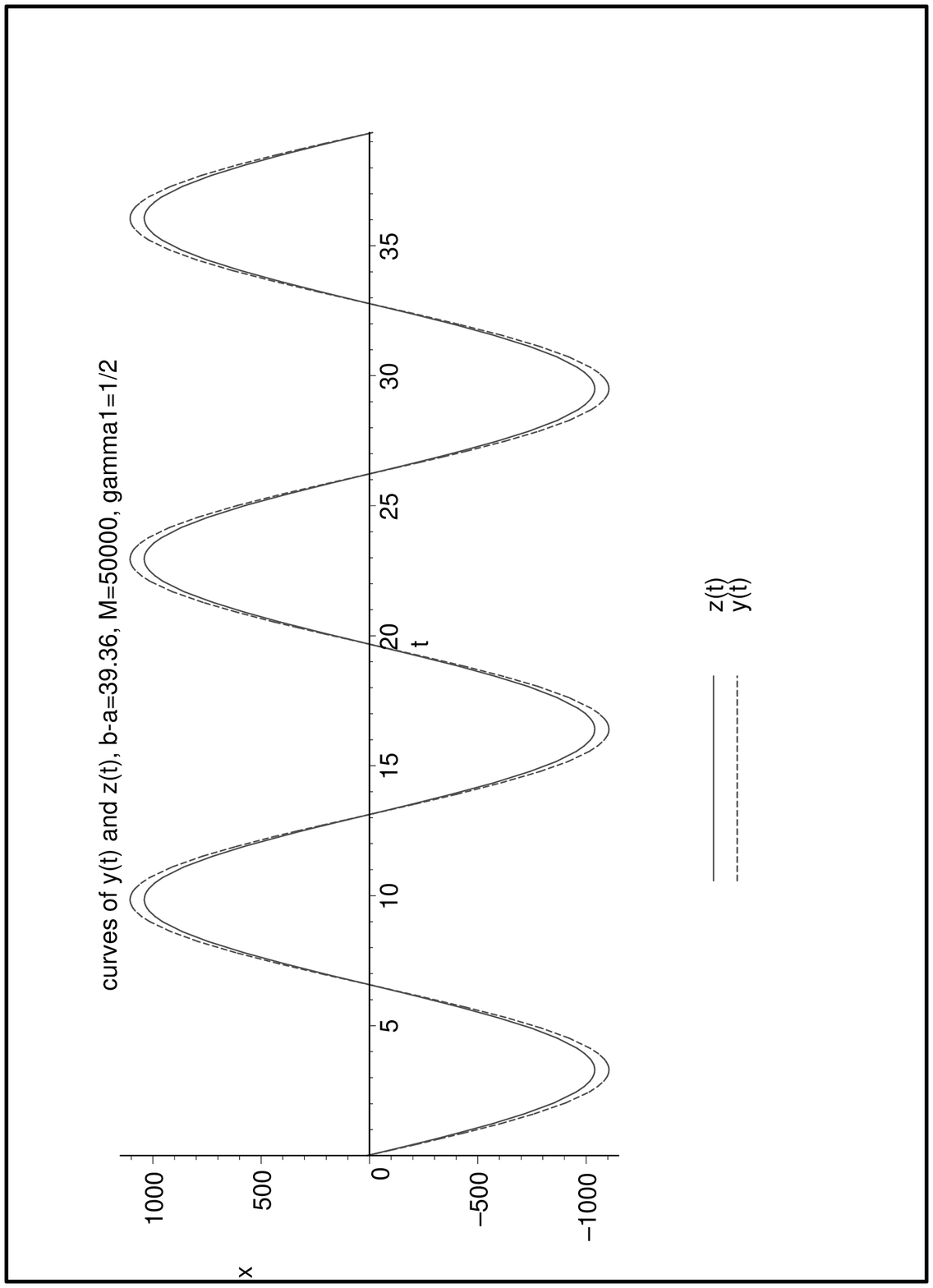}
\caption{\textit{Slow convergence in quasi-resonant case ($M=1000$ and $M=50000$)}}
\label{fig2-a.b}
\end{figure}

Next, let us present two examples of non-convergence of solutions, that is to say when $r=\gamma_1\neq \pm\frac{1}{2}$. We remind that the study of the convergence of the operators in D.E.L. to the operators in C.E.L. is studied in \cite[pp.7, Theorem 6.3]{RS2}, and may be shortened as : $\Box=\Box_\varepsilon^{[r,s]}$. Next, the existence of pseudo-periodic solutions implies $r+s=1$ (see \cite[pp.7, Proposition 5.2]{RS2}). If $r=s=\gamma_1\neq\frac{1}{2}$, then $\Box_\varepsilon^{[r,r]}$ does not fullfil the previous requirements. We display in Figures \ref{fig3-a.b} two such instances with $\gamma_1=0.6$ and $\gamma_1=(1+i)/2$.

\begin{figure}[ht]
\includegraphics[width=4.6cm,angle=270]{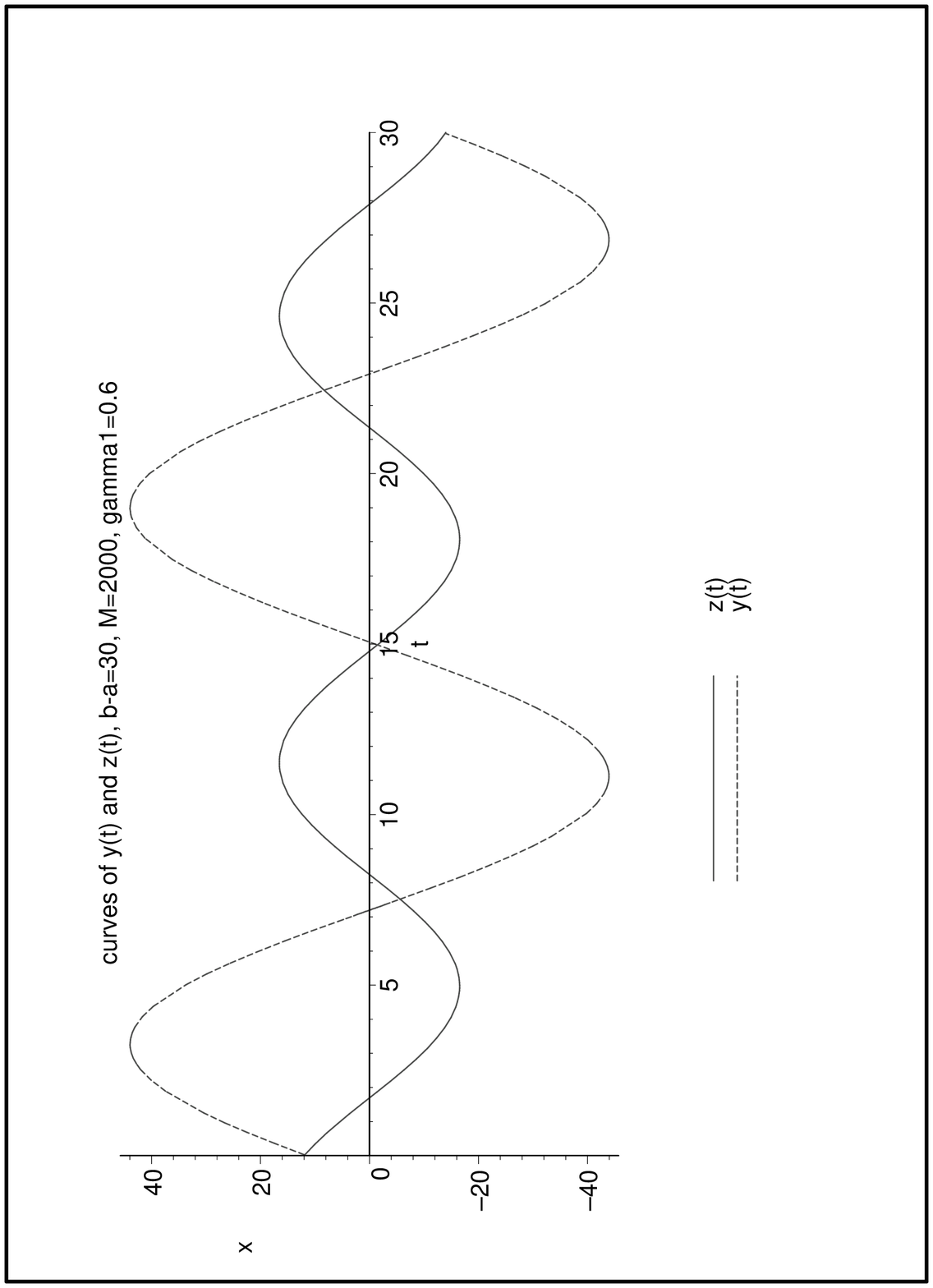}~\includegraphics[width=4.6cm,angle=270]{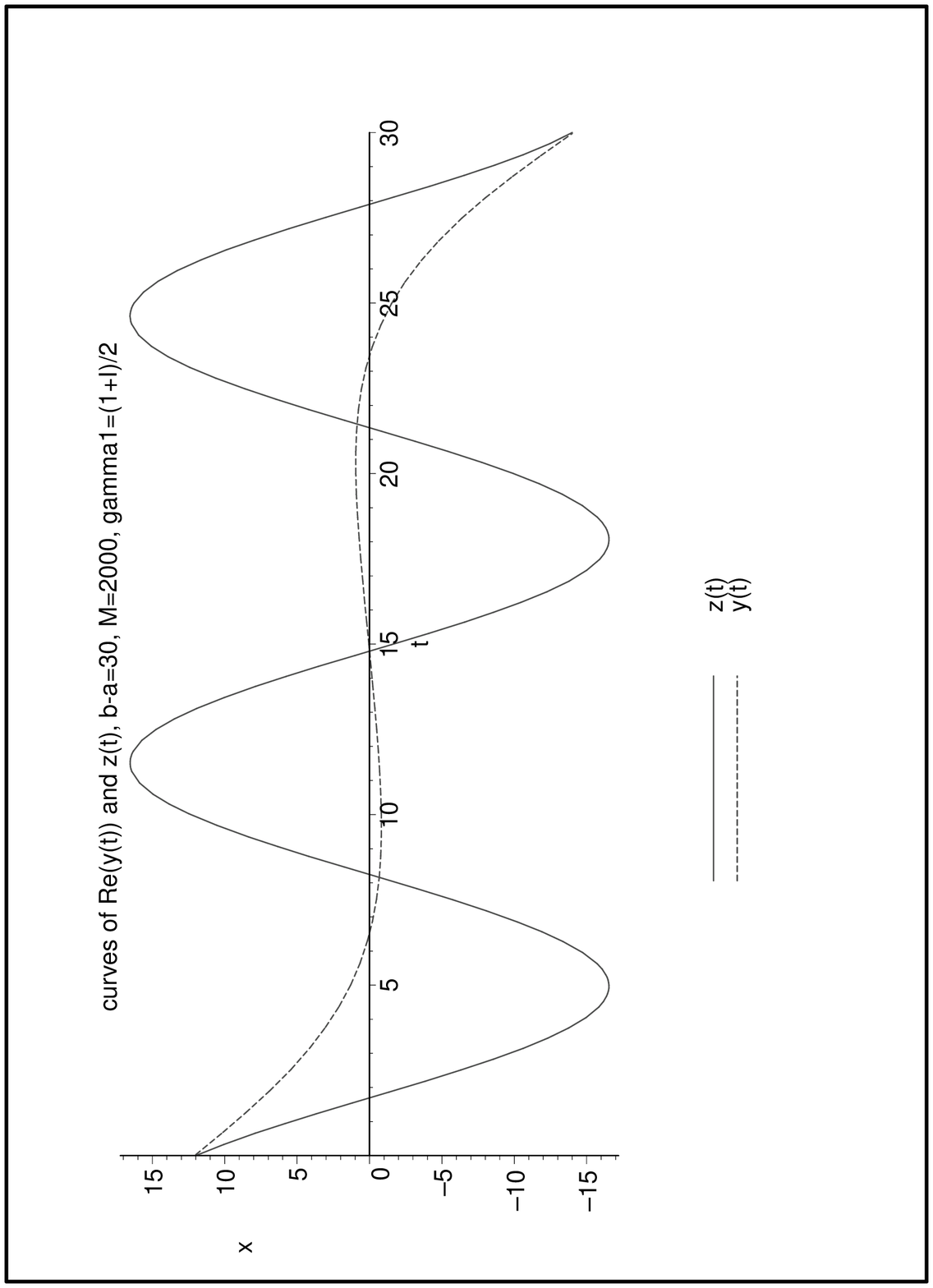}
\caption{\textit{Non-convergence phenomenon ($\gamma_1=0.6$ and $\gamma_1=(1+i)/2$)}}
\label{fig3-a.b}
\end{figure}

Let us conclude this paper with the following problems. First, formal and numerical codes have been written to do experiments on D.E.L. and C.E.L. in higher dimension ($d\geq 2$), to deal with huge matrices $A$ and to work with non-periodic solutions. However, mainly due to the characteristic functions $\chi_i(t)$ occuring in $\Box_\varepsilon$, no general pattern has been found neither for the convergence nor for non-convergence. Second, it would be interesting to get qualitative properties as continuity or mesurability of solutions $\mathbf{x}_\varepsilon(t)$ of D.E.L. as it is usual in the theory of functional equations. Another work is to relate the convergence of schemes to the convergence of solutions, none of these properties implying the other. These directions seem to be some interesting perspectives for subsequent work.


\end{document}